\newtheorem{Theorem}{Theorem}[section]
\newtheorem{lemma}[Theorem]{Lemma}
\newtheorem{proposition}[Theorem]{Proposition}
\theoremstyle{definition}
\newtheorem{remark}[Theorem]{Remark}
\newtheorem{example}[Theorem]{Example}
\newcommand{\R}{{\mathbb R}}
\DeclareMathOperator{\argmin}{argmin}
\newcommand{\interior}{{\rm int}\kern 0.06em}
\newcommand{\Hb}{\mathcal H}
\newcommand{\Elambdaxi}{\mathcal E_{\lambda,\xi}}
\newcommand{\Elambdap}{\mathcal E_\lambda^p}
\newcommand{\dta}{\frac{2}{3}\alpha}
\newcommand{\dtat}{\frac{2}{3}(\alpha-3)}
\begin{document}

\title{ON THE FAST CONVERGENCE OF AN INERTIAL GRADIENT-LIKE DYNAMICS WITH VANISHING VISCOSITY}

\author{Hedy Attouch}

\address{Institut de Math\'ematiques et Mod\'elisation de Montpellier, UMR 5149 CNRS, Universit\'e Montpellier 2, place Eug\`ene Bataillon,
34095 Montpellier cedex 5, France}
\email{hedy.attouch@univ-montp2.fr}

\author{Juan Peypouquet}
\address{Universidad T\'ecnica Federico Santa Mar\'\i a, Av Espa\~na 1680, Valpara\'\i so, Chile}
\email{juan.peypouquet@usm.cl}

\author{Patrick Redont}
\address{Institut de Math\'ematiques et Mod\'elisation de Montpellier, UMR 5149 CNRS, Universit\'e Montpellier 2, place Eug\`ene Bataillon,
34095 Montpellier cedex 5, France}
\email{patrick.redont@univ-montp2.fr}

\date{June 11, 2015}


\keywords{Convex optimization, fast convergent methods, dynamical systems, gradient flows, inertial dynamics, vanishing viscosity, Nesterov method}

\begin{abstract}
In a real Hilbert space  $\mathcal H$,  we study the fast convergence properties as $t \to + \infty$ of the trajectories of the second-order evolution equation
$$
\ddot{x}(t) + \frac{\alpha}{t} \dot{x}(t) + \nabla \Phi (x(t)) = 0,
$$
where $\nabla \Phi$ is the gradient of a convex continuously differentiable function $\Phi : \mathcal H \rightarrow \mathbb R$, and $\alpha$ is a positive parameter. In this inertial system, the viscous  damping coefficient $\frac{\alpha}{t}$ vanishes asymptotically in a moderate way. For $\alpha > 3$, we show that any trajectory converges weakly to a minimizer of $\Phi$, just assuming that $\argmin \Phi \neq \emptyset$. The strong convergence is established in various  practical situations. These results complement the $\mathcal O(t^{-2})$ rate of convergence for the values obtained by Su, Boyd and Cand\`es. Time discretization of this system, and some of its variants, provides new fast converging algorithms, expanding the field of rapid methods for structured convex minimization introduced by Nesterov, and further developed by Beck and Teboulle. This study also complements recent advances due to Chambolle and Dossal.
\end{abstract}

\thanks{Effort sponsored by the Air Force Office of Scientific Research, Air Force Material Command, USAF, under grant number FA9550-14-1-0056. Also supported by Fondecyt Grant 1140829, Conicyt Anillo ACT-1106, ECOS-Conicyt Project C13E03, Millenium Nucleus ICM/FIC RC130003, Conicyt Project MATHAMSUD 15MATH-02, Conicyt Redes 140183, and Basal Project CMM Universidad de Chile.}

\maketitle

\vspace{0.3cm}

\section{Introduction}

Let $\mathcal H$ be a real Hilbert space, which is endowed with the scalar product $\langle \cdot,\cdot\rangle$ and norm $\|\cdot\|$, and let $\Phi : \mathcal H \rightarrow \mathbb R$ be a (smooth) convex function. In this paper, we study the solution trajectories of the second-order differential equation 
\begin{equation}\label{edo01}
\ddot{x}(t) + \frac{\alpha}{t} \dot{x}(t) + \nabla \Phi (x(t)) = 0,
\end{equation}
with $\alpha>0$, in terms of their asymptotic behavior as $t \to + \infty$. Although this is not our main concern, we point out that, given $t_0>0$, for any $x_0 \in \mathcal H,  \ v_0 \in \mathcal H $, the existence of a unique global solution on $[t_0,+\infty[$ for the Cauchy problem with initial condition $x(t_0)=x_0$ and $\dot x(t_0)=v_0$ can be guaranteed, for instance, if $\nabla\Phi$ is Lipschitz-continuous on bounded sets.

\medskip

The importance of this evolution system is threefold:

\medskip

{\it 1. Mechanical interpretation:} It describes the position of a particle subject to a potential energy function $\Phi$, and an isotropic linear damping with a viscosity parameter that vanishes asymptotically. This provides a simple model for a progressive reduction of the friction, possibly due to material fatigue.

\bigskip

{\it 2. Fast minimization of function $\Phi$:} Equation \eqref{edo01} is a particular case of
the inertial gradient-like system 
\begin{equation}\label{edo2}
\ddot{x}(t) + a(t) \dot{x}(t) + \nabla \Phi (x(t)) = 0,
\end{equation}
with {\it asymptotic vanishing damping}, studied by Cabot, Engler and Gadat in \cite{CEG1,CEG2}. As shown in \cite[Corollary 3.1]{CEG1} (under some additional conditions on $\Phi$), every solution $x(\cdot)$ of \eqref{edo2} satisfies $\lim_{t\to+\infty}\Phi(x(t))=\min\Phi$, provided $\int_0^{\infty}a(t)dt = + \infty$. The specific case \eqref{edo01} was studied by Su, Boyd and Cand\`es in \cite{SBC} in terms of the rate of convergence for the values. More precisely, \cite[Theorem 4.1]{SBC} establishes that 
$\Phi(x(t))-\min\Phi =\mathcal O(t^{-2})$, whenever $\alpha\ge 3$. Unfortunately, their analysis does not entail the convergence of the trajectory itself.

\bigskip

{\it 3. Relationship with fast numerical optimization methods:} As pointed out in \cite[Section 2]{SBC}, for $\alpha=3$, \eqref{edo01} can be seen as a continuous version of the fast convergent method of Nesterov (see \cite{Nest1,Nest2,Nest3,Nest4}), and its widely used successors, such as the {\it Fast Iterative Shrinkage-Thresholding Algorithm} (FISTA), studied in \cite{BT}. These methods have a convergence rate of $\Phi(x^k)-\min\Phi=\mathcal O(k^{-2})$, where $k$ is the number of iterations. As for the continuous-time system \eqref{edo01}, convergence of the sequences generated by FISTA and related methods has not been established so far. This is a central and long-standing question in the study of numerical optimization methods.

\bigskip

The purpose of this research is to establish the convergence of the trajectories satisfying \eqref{edo01}, as well as the sequences generated by the corresponding numerical methods with Nesterov-type acceleration. We also complete the study with several stability properties concerning both the continuous-time system and the algorithms.

\bigskip

More precisely, the main contributions of this work are the following: In Section \ref{S:minimizing}, we first establish the minimizing property in the general case where $\alpha>0$ and $\inf\Phi$ is not necessarily attained. As a consequence, every weak limit point of the trajectory must be a minimizer of $\Phi$, and so, the existence of a bounded trajectory characterizes the existence of minimizers. Next, assuming $\argmin\Phi\neq\emptyset$ and $\alpha\ge 3$, we recover the $\mathcal O(t^{-2})$ convergence rates and give several examples and counterexamples concerning the optimality of these results. Next, we show that every solution of \eqref{edo01} converges weakly to a minimizer of $\Phi$ provided $\alpha >3$ and $\argmin \Phi \neq \emptyset$. We rely on a Lyapunov analysis, which was first used by Alvarez \cite{Al} in the context of the heavy ball with friction. For the limiting case $\alpha=3$, which corresponds exactly to Nesterov's method, the convergence of the tra!
 jectories is still a puzzling open question. We finish this section by providing an ergodic convergence result for the acceleration of the system in case $\nabla\Phi$ is Lipschitz-continuous on sublevel sets of $\Phi$. Strong convergence is established in various practical situations enjoying further geometric features, such as strong convexity, symmetry, or nonemptyness of the interior of the solution set (see Section \ref{S:strong}). In the strongly convex case, we obtained a surprising result: convergence of the values occurs at a rate of $\mathcal O(t^{-\frac{2\alpha}{3}})$. Section \ref{S:algorithm} contains the analogous results for the associated Nesterov-type algorithms (which also correspond to the case $\alpha>3$). As we were preparing the final version of this manuscript, we discovered the preprint \cite{CD} by Chambolle and Dossal, where the weak convergence result is obtained by a similar, but different, argument (see \cite[Theorem 3]{CD}).


\section{Minimizing property, convergence rates and weak convergence of the trajectories}\label{S:minimizing}

We begin this section by providing some preliminary estimations concerning the global energy of the system \eqref{edo01} and the distance to the minimizers of $\Phi$. These allow us to show the minimizing property of the trajectories under minimal assumptions. Next, we recover the convergence rates for the values originally given in \cite{SBC} and obtain further decay estimates that ultimately imply the convergence of the solutions of \eqref{edo01}. We finish the study by proving an ergodic convergence result for the acceleration. Several examples and counterexamples are given throughout the section.

\subsection{Preliminary remarks and estimations}

The existence of global solutions to \eqref{edo01} has been examined, for instance, in \cite[Proposition 2.2.]{CEG1} in the case of a general asymptotic vanishing damping coefficient. In our setting, for any $t_0 >0$, $\alpha >0$, and  $(x_0,v_0)\in \mathcal H\times\mathcal H$, there exists a unique global solution $x : [t_0, +\infty[ \rightarrow \mathcal H$ of \eqref{edo01}, satisfying the initial condition $x(t_0)= x_0$, $\dot{x}(t_0)= v_0$, under the sole assumption that $\inf\Phi > - \infty$. Taking $t_0>0$ comes from the singularity of the damping coefficient $a(t) = \frac{\alpha}{t}$ at zero. Indeed, since we are only concerned about the asymptotic behavior of the trajectories, we do not really care about the origin of time. If one insists in starting from $t_0 =0$, then all the results remain valid with $a(t)=\frac{\alpha}{t+1}$.

\bigskip

At different points, we shall use the {\it global energy} of the system, given by $W:[t_0,+\infty [\rightarrow\R$
\begin{equation}\label{E:W}
W(t) = \frac{1}{2}\|\dot{x}(t)\|^2 + \Phi (x(t)).
\end{equation}

Using \eqref{edo01}, we immediately obtain
\begin{lemma}\label{L:W}
Let $W$ be defined by \eqref{E:W}. For each $t>t_0$, we have
$$\dot{W}(t) = -\frac{\alpha}{t}\|\dot{x}(t)\|^2.$$ 
Hence, $W$ is nonincreasing\footnote{In fact, $W$ decreases strictly, as long as the trajectory is not stationary.}, and $W_\infty=\lim_{t\rightarrow+\infty}W(t)$ exists in $\R\cup\{-\infty\}$. If $\Phi$ is bounded from below, $W_\infty$ is finite.
\end{lemma}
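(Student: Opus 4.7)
The plan is very short because the statement is a standard energy identity that follows directly from the evolution equation. First I would differentiate $W$ using the chain rule in the Hilbert space setting: since $x$ is (by assumption) a $C^2$ solution of \eqref{edo01} and $\Phi$ is continuously differentiable, we have
$$\dot W(t) = \langle \ddot x(t),\dot x(t)\rangle + \langle \nabla\Phi(x(t)),\dot x(t)\rangle = \langle \ddot x(t) + \nabla\Phi(x(t)),\,\dot x(t)\rangle.$$

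Next I would substitute the equation \eqref{edo01}, which gives $\ddot x(t) + \nabla\Phi(x(t)) = -\frac{\alpha}{t}\dot x(t)$. Plugging this into the identity above yields $\dot W(t) = -\frac{\alpha}{t}\|\dot x(t)\|^2$, which is the claimed formula. Since $\alpha>0$ and $t\ge t_0 > 0$, the right-hand side is nonpositive, so $W$ is nonincreasing on $[t_0,+\infty[$.

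Any nonincreasing function on $[t_0,+\infty[$ admits a limit in $\mathbb R\cup\{-\infty\}$, which gives the existence of $W_\infty$. If moreover $\Phi$ is bounded from below, say $\Phi\ge m$, then $W(t)\ge \frac12\|\dot x(t)\|^2 + m \ge m$ for every $t\ge t_0$, so the monotone limit cannot be $-\infty$ and is therefore finite. For the footnote, I would observe that $\dot W(t_1)=0$ forces $\dot x(t_1)=0$; if this vanishing persists on an interval, then $\ddot x\equiv 0$ there as well, so \eqref{edo01} forces $\nabla\Phi(x)=0$, which by convexity means $x$ sits at a minimizer and the trajectory has become stationary.

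There is no real obstacle here: everything reduces to the chain rule and monotone convergence. The only point worth a brief comment is the regularity needed to differentiate $W$, which is guaranteed by the global $C^2$ existence recalled immediately before the lemma.
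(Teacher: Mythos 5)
Your argument is correct and coincides with the paper's (implicit) proof: the lemma is stated there as an immediate consequence of differentiating $W$ and substituting \eqref{edo01}, exactly as you do, with the monotone-limit and lower-boundedness remarks following in the same way. Nothing further is needed.
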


Now, given $z\in\mathcal H$, we define $h_z:[t_0,+\infty [ \to\R$ by
\begin{equation} \label{E:h_z}
h_z(t)=\frac{1}{2}\|x(t)-z\|^2.
\end{equation} 

By the Chain Rule, we have
$$\dot{h}_z(t) = \langle x(t) - z , \dot{x}(t)  \rangle\qquad\hbox{and}\qquad 
\ddot{h}_z(t) = \langle x(t) - z , \ddot{x}(t)  \rangle + \| \dot{x}(t) \|^2.$$
Using \eqref{edo01}, we obtain
\begin{equation} \label{wconv3}
\ddot{h}_z(t) + \frac{\alpha}{t} \dot{h}_z(t) =  \| \dot{x}(t) \|^2 + \langle x(t) - z , \ddot{x}(t) + \frac{\alpha}{t} \dot{x}(t)  \rangle =  \| \dot{x}(t) \|^2 +  \langle x(t) - z , -\nabla \Phi (x(t)) \rangle .
\end{equation} 
The convexity of $ \Phi$ implies
$$ \langle x(t) - z , \nabla \Phi (x(t))  \rangle \geq  \Phi (x(t)) - \Phi (z),$$
and we deduce that
\begin{equation} \label{E:ineq_h_z}
\ddot{h}_z(t) + \frac{\alpha}{t} \dot{h}_z(t) +\Phi (x(t)) - \Phi (z)  \leq \|\dot{x}(t)\|^2.
\end{equation} 

We have the following relationship between $h_z$ and $W$:

\begin{lemma}\label{L:Wh_z}
Take $z\in \mathcal H$, and let $W$ and $h_z$ be defined by \eqref{E:W} and \eqref{E:h_z}, respectively. There is a constant $C$ such that $$\int_{t_0}^t \frac{1}{s}\left(W(s) - \Phi(z)\right)ds  
		\leq C-\frac{1}{t} \dot{h}_{z}(t)-\frac{3}{2\alpha}W(t).$$
\end{lemma}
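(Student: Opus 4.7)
The plan is to combine the differential inequality \eqref{E:ineq_h_z} with the energy identity of Lemma \ref{L:W} to produce an integrated estimate, after a judicious substitution that eliminates $\Phi(x(t))$ in favor of $W(t)$.

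First I would use $\|\dot x(t)\|^2 = 2(W(t)-\Phi(x(t)))$ to rewrite \eqref{E:ineq_h_z} as
\begin{equation*}
\ddot h_z(t) + \frac{\alpha}{t}\dot h_z(t) + W(t) - \Phi(z) \;\leq\; \frac{3}{2}\|\dot x(t)\|^2,
\end{equation*}
so that the desired integrand $\frac{1}{s}(W(s)-\Phi(z))$ already appears after dividing by $t$. Dividing by $t$ and using the identity
\begin{equation*}
\frac{\ddot h_z(t)}{t} = \frac{d}{dt}\!\left[\frac{\dot h_z(t)}{t}\right] + \frac{\dot h_z(t)}{t^2}
\end{equation*}
rewrites the second order piece as a total derivative, at the cost of introducing a term $\frac{\alpha+1}{t^2}\dot h_z(t)$. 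Integrating from $t_0$ to $t$ turns the total derivative into $\frac{1}{t}\dot h_z(t)-\frac{1}{t_0}\dot h_z(t_0)$, while Lemma \ref{L:W} gives $\frac{1}{s}\|\dot x(s)\|^2 = -\frac{1}{\alpha}\dot W(s)$, so the right-hand side integrates to $\frac{3}{2\alpha}(W(t_0)-W(t))$. Rearranging, all the desired terms of the statement appear, together with one leftover: $-(\alpha+1)\int_{t_0}^t \frac{\dot h_z(s)}{s^2}\,ds$, which must be bounded above by a constant.

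The main obstacle, and the step that really uses the structure of $h_z$, is controlling this leftover integral. Integration by parts yields
\begin{equation*}
\int_{t_0}^t \frac{\dot h_z(s)}{s^2}\,ds \;=\; \frac{h_z(t)}{t^2} - \frac{h_z(t_0)}{t_0^2} + 2\int_{t_0}^t \frac{h_z(s)}{s^3}\,ds,
\end{equation*}
and since $h_z(\cdot)\ge 0$ the first and third terms are non-negative, which is precisely the one-sided sign we need: the integral is bounded below by $-h_z(t_0)/t_0^2$, uniformly in $t$. Consequently $-(\alpha+1)\int_{t_0}^t \frac{\dot h_z(s)}{s^2}\,ds \le (\alpha+1)h_z(t_0)/t_0^2$. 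Collecting everything, the constant
\begin{equation*}
C \;=\; \frac{3}{2\alpha}W(t_0) + \frac{1}{t_0}\dot h_z(t_0) + \frac{\alpha+1}{t_0^2}\,h_z(t_0)
\end{equation*}
depends only on $t_0$, $z$ and the initial data, and yields the claimed inequality. The computation is otherwise routine; the only delicate point is recognizing that the sign of $h_z$ alone suffices to dispose of the unwanted $\dot h_z/s^2$ term after integration by parts.
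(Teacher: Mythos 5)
Your proposal is correct and follows essentially the same route as the paper: rewrite \eqref{E:ineq_h_z} using $\Phi(x)=W-\tfrac12\|\dot x\|^2$, divide by $t$, integrate (your total-derivative identity is exactly the paper's integration by parts, producing the $(\alpha+1)\dot h_z/s^2$ term), use Lemma \ref{L:W} to evaluate $\int \tfrac{3}{2s}\|\dot x\|^2$, and dispose of the leftover integral via a second integration by parts and the nonnegativity of $h_z$. The constant you exhibit coincides with the paper's $C$.
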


\begin{proof}
Divide \eqref{E:ineq_h_z} by $t$, and use the definition of $W$ given in \eqref{E:W}, to obtain
$$\frac{1}{t} \ddot{h}_{z}(t) + \frac{\alpha}{t^2} \dot{h}_{z}(t) + \frac{1}{t}\left(  W(t) - \Phi (z) \right)  
\leq \frac{3}{2t}\|\dot{x}(t)\|^2.$$
Integrate this expression from $t_0$ to $t>t_0$ (use integration by parts for the first term), to obtain 
\begin{equation}\label{conv3-8}
\int_{t_0}^t \frac{1}{s}\left(  W(s) - \Phi (z) \right)ds  \leq 
  \frac{1}{t_0} \dot{h}_{z}(t_0) - \frac{1}{t} \dot{h}_{z}(t) - 
(\alpha  + 1)\int_{t_0}^t \frac{1}{s^2}\dot{h}_{z}(s)ds +
\int_{t_0}^t \frac{3}{2s}\|\dot{x}(s)\|^2 ds.
\end{equation}
On the one hand, Lemma \ref{L:W} gives
$$
\int_{t_0}^t \frac{3}{2s}\|\dot{x}(s)\|^2 ds = 
\frac{3}{2\alpha}(W(t_0)-W(t)).
$$
On the other hand, another integration by parts yields 
$$\int_{t_0}^t \frac{1}{s^2}\dot{h}_{z}(s)ds = 
\frac{1}{t^2}h_z(t) - \frac{1}{t_0^2}h_z(t_0) + 
  \int_{t_0}^t\frac{2}{s^3}h_z(s)ds 
\geq - \frac{1}{t_0^2}h_z(t_0).$$
Combining these inequalities with \eqref{conv3-8}, we get 
$$
\int_{t_0}^t \frac{1}{s}\left(  W(s) - \Phi (z) \right)ds  
\leq 
  \frac{1}{t_0} \dot{h}_{z}(t_0) - \frac{1}{t} \dot{h}_{z}(t) + 
(\alpha  + 1)\frac{1}{t_0^2}h_z(t_0) + 
\frac{3}{2\alpha}(W(t_0)-W(t))
=
C-\frac{1}{t} \dot{h}_{z}(t)-\frac{3}{2\alpha}W(t),$$
where $C$ collects the constant terms.
\end{proof}

\subsection{Minimizing property}

It turns out that the trajectories of \eqref{edo01} minimize $\Phi$ in the completely general setting, where $\alpha>0$, $\argmin \Phi$ is possibly empty and $\Phi$ is not necessarily bounded from below. This property was obtained by Alvarez in \cite[Theorem 2.1]{Al} for the heavy ball with friction (where the damping is constant). Similar results can be found in \cite{CEG1}.

We have the following:

\begin{Theorem} \label{Thm-weak-conv2}
Let $\alpha>0$ and suppose $x:[t_0,+\infty[\rightarrow\Hb$ is a solution of \eqref{edo01}. Then 
\begin{enumerate}
	\item [i)] $W_\infty=\lim_{t \rightarrow+\infty}W(t)=\lim_{t \rightarrow+\infty}\Phi(x(t))=\inf\Phi\in\R\cup\{-\infty\}$.
	\item [ii)] As $t\to+\infty$, every weak limit point of $x(t)$ lies in $\argmin\Phi$. 
	\item [iii)] If $\argmin\Phi=\emptyset$, then $\lim_{t\to+\infty}\|x(t)\|=+\infty$.
	\item [iv)] If $x$ is bounded, then $\argmin\Phi\neq\emptyset$.
	\item [v)] If $\Phi$ is bounded from below, then $\lim_{t\to+\infty}\|\dot x(t)\|=0$. 
	\item [vi)] If $\Phi$ is bounded from below and $x$ is bounded, then $\lim_{t\to+\infty}\dot h_z(t)=0$ for each $z\in\mathcal H$. Moreover,  
  	$$\int_{t_0}^\infty\frac{1}{t}(\Phi(x(t))-\min\Phi)dt<+\infty.$$
\end{enumerate}
\end{Theorem}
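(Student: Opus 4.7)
The plan is to prove (i) first and deduce the rest from it. From Lemma \ref{L:W} we already have the existence of $W_\infty\in\R\cup\{-\infty\}$, and $W(t)\ge\Phi(x(t))\ge\inf\Phi$ gives $W_\infty\ge\inf\Phi$. If $W_\infty=-\infty$, then $\Phi(x(t))\to-\infty$ and hence $\inf\Phi=-\infty$, so (i) holds trivially. The real work is to show $W_\infty\le\inf\Phi$ when $W_\infty$ is finite.

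I will argue by contradiction, picking $z\in\Hb$ with $\Phi(z)<W_\infty$ and setting $\delta:=W_\infty-\Phi(z)>0$. Since $W$ decreases to $W_\infty$, we have $W(s)-\Phi(z)\ge\delta$ for every $s\ge t_0$, so the left-hand side in Lemma \ref{L:Wh_z} is at least $\delta\ln(t/t_0)$. On the right-hand side, $-\frac{3}{2\alpha}W(t)$ is bounded above (since $W(t)\ge W_\infty$), so the lemma forces $-\frac{1}{t}\dot h_z(t)\ge\delta\ln(t/t_0)-C'$, hence $\dot h_z(t)\le-\frac{\delta}{2}\,t\ln(t/t_0)$ for $t$ large. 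Integrating this upper bound in $t$ drives $h_z(t)\to-\infty$, contradicting $h_z\ge 0$. Thus $W_\infty=\inf\Phi$; sandwiching $\Phi(x(t))$ between $\inf\Phi$ and $W(t)$ then yields $\lim\Phi(x(t))=\inf\Phi$. This contradiction argument is the main obstacle: the key is to convert the logarithmic lower bound on the integral coming from Lemma \ref{L:Wh_z} into a coercive negative rate on $\dot h_z$ that clashes with the positivity of $h_z$.

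The remaining parts follow quickly from (i). For (ii), weak lower semicontinuity of the convex continuous $\Phi$ combined with (i) yields $\Phi(\xb)\le\liminf_{n\to\infty}\Phi(x(t_n))=\inf\Phi$ along any sequence $t_n\to+\infty$ with $x(t_n)\rightharpoonup\xb$. Part (iii) is the contrapositive: if $\liminf_{t\to+\infty}\|x(t)\|<+\infty$, extract a bounded sequence and then a weakly convergent subsequence, whose limit would belong to $\argmin\Phi$ by (ii). Part (iv) is the contrapositive of (iii). For (v), when $\Phi$ is bounded from below, $W_\infty=\inf\Phi$ is finite by (i), so $\tfrac{1}{2}\|\dot x(t)\|^2=W(t)-\Phi(x(t))\to 0$. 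For (vi), choose $z\in\argmin\Phi$ (it exists by (iv)); Cauchy--Schwarz and (v) give $|\dot h_z(t)|\le\|x(t)-z\|\,\|\dot x(t)\|\to 0$. Applying Lemma \ref{L:Wh_z} to this $z$, the right-hand side is bounded in $t$ since $\tfrac{1}{t}\dot h_z(t)\to 0$ and $W(t)\to\min\Phi$, and since $\tfrac{1}{s}(\Phi(x(s))-\min\Phi)\le\tfrac{1}{s}(W(s)-\min\Phi)$, the desired integral bound follows.
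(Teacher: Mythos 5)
Your proposal is correct and takes essentially the same route as the paper: part i) rests on Lemma \ref{L:Wh_z}, the monotonicity of $W$, and the nonnegativity of $h_z$, with your contradiction argument (forcing $\dot h_z(t)\le -\tfrac{\delta}{2}\,t\ln(t/t_0)$ and integrating once) being merely a repackaging of the paper's direct second integration in $t$, which yields $(W(\tau)-\Phi(z))(\tau\ln\tau+A\tau+B)\le \widetilde C\tau+D$ and hence $W_\infty\le\Phi(z)$. Parts ii)--vi) are handled exactly as in the paper; note only that your Cauchy--Schwarz bound $|\dot h_z(t)|\le\|x(t)-z\|\,\|\dot x(t)\|$ uses nothing about $z$, so it gives $\lim_{t\to+\infty}\dot h_z(t)=0$ for every $z\in\mathcal H$, as the statement requires, not just for $z\in\argmin\Phi$.
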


\begin{proof}
To prove i), first set $z\in\Hb$ and $\tau\geq t>t_0$. By Lemma \ref{L:W}, $W$ in nonincreasing. Hence, Lemma \ref{L:Wh_z} gives
$$(W(\tau)-\Phi(z))\int_{t_0}^t\frac{ds}{s}+\frac{3}{2\alpha}W(\tau) \leq 
C-\frac{1}{t}\dot h_z(t),$$
which we rewrite as
$$
(W(\tau)-\Phi(z))\left(\int_{t_0}^t\frac{ds}{s}+\frac{3}{2\alpha}\right) \leq 
  C-\frac{3}{2\alpha}\Phi(z)-\frac{1}{t}\dot h_z(t),
$$
and then
$$
(W(\tau)-\Phi(z))\left(\ln(t)+\frac{3}{2\alpha}-\ln(t_0)\right) \leq 
  C-\frac{3}{2\alpha}\Phi(z)-\frac{1}{t}\dot h_z(t).
$$
Integrate from $t=t_0$ to $t=\tau$ to obtain
$$
(W(\tau)-\Phi(z))
  \left(
  \tau\ln(\tau)-t_0\ln(t_0)+t_0 -\tau+
    \left(\frac{2}{3\alpha}-\ln(t_0)\right)(\tau-t_0)\right) 
  \leq 
  \left(C-\frac{3}{2\alpha}\Phi(z)\right)(\tau-t_0)-
  \int_{t_0}^\tau\frac{1}{t}\dot h_z(t)dt.
$$
But
$$
\int_{t_0}^\tau\frac{\dot h_z(t)}{t}dt
  =\frac{h_z(\tau)}{\tau}-\frac{h_z(t_0)}{t_0}+\int_{t_0}^\tau\frac{h_z(t)}{t^2}dt
  \geq-\frac{h_z(t_0)}{t_0}.
$$
Hence, 
$$
(W(\tau)-\Phi(z))(\tau\ln(\tau)+A\tau+B)
  \leq \widetilde C\tau+D,
$$
for suitable constants $A$, $B$, $\widetilde C$ and $D$. This immediately yields $W_\infty\leq\Phi(z)$, and hence $W_\infty\leq \inf\Phi$. It suffices to observe that 
$$\inf\Phi\leq \liminf_{t\to+\infty}\Phi(x(t))\leq  \limsup_{t\to+\infty}\Phi(x(t))\leq \lim_{t\to+\infty}W(t)=W_\infty$$ 
to obtain i). 

Next, ii) follows from i) by the weak lower-semicontinuity of $\Phi$. Clearly, iii) and iv) are immediate consequences of ii). We obtain v) by using i) and the definition of $W$ given in \eqref{E:W}. For vi), since $\dot h_z(t)=\langle x(t) - z , \dot{x}(t)  \rangle$ and $x$ is bounded, v) implies $\lim\limits_{t\to+\infty}\dot h_z(t)=0$. Finally, using the definition of $W$ together with Lemma \ref{L:Wh_z} with $z\in\argmin\Phi$, we get
$$\int_{t_0}^\infty\frac{1}{t}(\Phi(x(t))-\min\Phi)dt\le C-\frac{3}{2\alpha}\min\Phi<+\infty,$$
which completes the proof.
\end{proof}

\begin{remark}
We shall see in Theorem \ref{T:alpha3} that, for $\alpha\ge 3$, the existence of minimizers implies that every solution of \eqref{edo01} is bounded. This gives a converse to part iv) of Theorem \ref{Thm-weak-conv2}.
\end{remark}

If $\Phi$ is not bounded from below, it may be the case that $\|\dot x(t)\|$ does not tend to zero, as shown in the following example:

\begin{example}
Let $\mathcal H=\R$ and $\alpha>0$. The function $x(t)=t^2$ satisfies \eqref{edo01} with $\Phi(x)=-2(\alpha+1)x$. Then $\lim_{t\to+\infty}\Phi(x(t))=-\infty=\inf\Phi$, and $\lim_{t\to+\infty}\|\dot x(t)\|=+\infty$.
\end{example}

\subsection{Two ``anchored" energy functions}\label{DefEnerg}

We begin by introducing two important auxiliary functions and showing their basic properties. From now on, we assume $\argmin\Phi\neq\emptyset$. Fix 
$\lambda\geq0$, $\xi\geq0$, $p\geq0$ and $x^*\in\argmin\Phi$. Let $x:[t_0,+\infty [\to\mathcal H$ be a solution of \eqref{edo01}. For $t\geq t_0$ define
\begin{eqnarray*}
\Elambdaxi(t) & = & 
  t^2(\Phi(x(t))-\min\Phi)+\frac{1}{2}\|\lambda(x(t)-x^*)+t\dot x(t)\|^2+
  \frac{\xi}{2}\|x(t)-x^*\|^2, \\
\Elambdap(t) & = & 
  t^p\mathcal E_{\lambda,0}(t) \; = \;
  t^p\left(t^2(\Phi(x(t))-\min\Phi)+ 
    \frac{1}{2}\|\lambda(x(t)-x^*)+t\dot x(t)\|^2\right),
\end{eqnarray*}
and notice that $\Elambdaxi$ and $\Elambdap$ are sums of nonnegative terms. These generalize the energy functions $\mathcal E$ and $\tilde{\mathcal E}$ introduced in \cite{SBC}. More precisely, $\mathcal E=\mathcal E_{\alpha-1,0}$ and $\tilde{\mathcal E}=\mathcal E_{(2\alpha-3)/3}^1$.

We need some preparatory calculations prior to differentiating $\Elambdaxi$ 
and $\Elambdap$. For simplicity of notation, we do not make the dependence of 
$x$ or $\dot x$ on $t$ explicit. Notice that we use \eqref{edo01} in the 
second line to dispose of $\ddot x$. 
\begin{eqnarray*}
\frac{d}{dt}t^2(\Phi(x)-\min\Phi) & = & 
  2t(\Phi(x)-\min\phi)+t^2\langle\dot x,\nabla\Phi(x)\rangle \\
\frac{d}{dt}\frac{1}{2}\|\lambda(x-x^*)+t\dot x\|^2 & = & 
  - \lambda t\langle x-x^*,\nabla\Phi (x)\rangle 
  - \lambda(\alpha-\lambda-1)\langle x-x^*,\dot x\rangle 
  - (\alpha-\lambda-1)t\|\dot x\|^2 - t^2\langle\dot x,\nabla\Phi(x)\rangle \\
\frac{d}{dt}\frac{1}{2}\|x-x^*\|^2 & = & \langle x-x^*,\dot x\rangle.
\end{eqnarray*}
Whence, we deduce 
\begin{eqnarray}
\label{dElambdaxi}
\frac{d}{dt}\Elambdaxi(t) & = & 
  2t(\Phi(x)-\min\Phi) - \lambda t\langle x-x^*,\nabla\Phi(x)\rangle 
  + (\xi-\lambda(\alpha-\lambda-1))\langle x-x^*,\dot x\rangle 
  - (\alpha-\lambda-1)t\|\dot x\|^2 \\
\label{dElambdap}
\frac{d}{dt}\Elambdap(t) & = & 
  (p+2)t^{p+1}(\Phi(x)-\min\Phi) 
  - \lambda t^{p+1}\langle x-x^*,\nabla\Phi(x)\rangle
  - \lambda(\alpha-\lambda-1-p)t^p\langle x-x^*,\dot x\rangle \\
\nonumber
 & & 
  + \frac{\lambda^2 p}{2}t^{p-1}\|x-x^*\|^2
  - \left(\alpha-\lambda-1-\frac{p}{2}\right)t^{p+1}\|\dot x\|^2.
\end{eqnarray}

\begin{remark}\label{R:E_decreasing}
If $y\in\mathcal H$ and $x^*\in\argmin\Phi$, the convexity of $\Phi$ gives $\min\Phi=\Phi(x^*)\ge\Phi(y)+\langle\nabla\Phi(y),x^*-y\rangle$. Using this in \eqref{dElambdaxi} with $y=x(t)$, we obtain 
$$\frac{d}{dt}\Elambdaxi(t) \leq 
  (2-\lambda)\,t\,(\Phi(x)-\min\Phi) + (\xi-\lambda(\alpha-\lambda-1))\langle x-x^*,\dot x\rangle 
  - (\alpha-\lambda-1)\,t\,\|\dot x\|^2.$$
If one chooses $\xi^*=\lambda(\alpha-\lambda-1)$, then
$$\frac{d}{dt}\mathcal E_{\lambda,\xi^*}(t) \leq(2-\lambda)\,t\,(\Phi(x)-\min\Phi) - (\alpha-\lambda-1)\,t\,\|\dot x\|^2.$$
Therefore, if $\alpha\ge 3$ and $2\le\lambda\le\alpha-1$, then $\mathcal E_{\lambda,\xi^*}$ is nonincreasing. The extreme cases $\lambda=2$ and $\lambda=\alpha-1$ are of special importance, as we shall see shortly.
\end{remark}

\subsection{Rate of convergence for the values}

We now recover convergence rate results for the value of $\Phi$ along a trajectory, already established in \cite[Theorem 4.1]{SBC}:

\begin{Theorem}\label{T:SBC}
Let $x:[t_0,+\infty[ \to\mathcal H$ be a solution of \eqref{edo01} and assume $\argmin\Phi\neq\emptyset$. If $\alpha\ge 3$, then 
$$\Phi(x(t))-\min\Phi \leq \frac{\mathcal E_{\alpha-1,0}(t_0)}{t^2}.$$
If $\alpha>3$, then
$$\int_{t_0}^{+\infty}t\big(\Phi(x(t))-\min\Phi\big)\,dt\le \frac{\mathcal E_{\alpha-1,0}(t_0)}{\alpha-3}<+\infty.$$
\end{Theorem}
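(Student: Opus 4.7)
The plan is to exploit the energy function $\mathcal E_{\alpha-1,0}$, which, as observed in Remark \ref{R:E_decreasing}, corresponds exactly to the ``anchored'' choice $\lambda=\alpha-1$ and $\xi^*=\lambda(\alpha-\lambda-1)=0$. So the first move is to plug $\lambda=\alpha-1$ into the derivative formula \eqref{dElambdaxi} and apply the convexity inequality $\langle x-x^*,\nabla\Phi(x)\rangle\ge\Phi(x)-\min\Phi$ (used in Remark \ref{R:E_decreasing}) to obtain
$$\frac{d}{dt}\mathcal E_{\alpha-1,0}(t)\le(2-(\alpha-1))\,t\,(\Phi(x(t))-\min\Phi)-(\alpha-(\alpha-1)-1)\,t\,\|\dot x(t)\|^2=-(\alpha-3)\,t\,(\Phi(x(t))-\min\Phi).$$
This single differential inequality is the engine for both statements.

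For the first claim, note that when $\alpha\ge 3$ the right-hand side above is nonpositive, so $\mathcal E_{\alpha-1,0}$ is nonincreasing and therefore $\mathcal E_{\alpha-1,0}(t)\le\mathcal E_{\alpha-1,0}(t_0)$ for all $t\ge t_0$. Since $\mathcal E_{\alpha-1,0}(t)$ is a sum of nonnegative terms, one in particular has $t^2(\Phi(x(t))-\min\Phi)\le\mathcal E_{\alpha-1,0}(t)\le\mathcal E_{\alpha-1,0}(t_0)$, which yields the desired $\mathcal O(t^{-2})$ decay.

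For the second claim, assume $\alpha>3$ and integrate the differential inequality from $t_0$ to an arbitrary $T>t_0$:
$$(\alpha-3)\int_{t_0}^{T}t\bigl(\Phi(x(t))-\min\Phi\bigr)\,dt\le\mathcal E_{\alpha-1,0}(t_0)-\mathcal E_{\alpha-1,0}(T)\le\mathcal E_{\alpha-1,0}(t_0),$$
using $\mathcal E_{\alpha-1,0}(T)\ge 0$. Dividing by $\alpha-3>0$ and letting $T\to+\infty$ gives the announced integral bound.

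The argument is essentially mechanical once the correct energy is chosen, so the only real step requiring care is the derivative computation \eqref{dElambdaxi} itself (which relies on \eqref{edo01} to eliminate $\ddot x$); that work has already been carried out in Section \ref{DefEnerg}. No obstacle of substance remains: the sign conditions $\alpha\ge 3$ and $\alpha>3$ are exactly what is needed to make the two coefficients $(\alpha-\lambda-1)$ and $(\lambda-2)$ nonnegative (respectively strictly positive) at $\lambda=\alpha-1$, and the $\mathcal E_{\alpha-1,0}$-terms that are discarded (namely $\tfrac12\|(\alpha-1)(x-x^*)+t\dot x\|^2$) only improve the bounds.
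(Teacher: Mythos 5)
Your proof is correct and follows essentially the same route as the paper: choose $\lambda=\alpha-1$, $\xi=0$ (so that $\xi^*=\lambda(\alpha-\lambda-1)=0$ and the cross term vanishes), invoke Remark \ref{R:E_decreasing} to get $\frac{d}{dt}\mathcal E_{\alpha-1,0}(t)\le-(\alpha-3)\,t\,(\Phi(x(t))-\min\Phi)$, then use monotonicity plus nonnegativity of the energy terms for the $\mathcal O(t^{-2})$ bound, and integrate for the case $\alpha>3$. No gaps.
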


\begin{proof}
Suppose $\alpha\ge 3$. Choose $\lambda=\alpha-1$ and $\xi=0$, so that $\xi-\lambda(\alpha-\lambda-1)=\alpha-\lambda-1=0$ and $\lambda-2=\alpha-3$. Remark \ref{R:E_decreasing} gives
\begin{equation} \label{DE1}
\frac{d}{dt}\mathcal E_{\alpha-1,0}(t) \leq  - (\alpha-3)\,t\,(\Phi(x)-\min\Phi),
\end{equation} 
and $\mathcal E_{\alpha-1,0}$ is nonincreasing. Since
$t^2(\Phi(x)-\min\Phi)\leq \mathcal E_{\alpha-1,0}(t)$, we obtain
$$\Phi(x(t))-\min\Phi \leq \frac{\mathcal E_{\alpha-1,0}(t_0)}{t^2}.$$ 
If $\alpha>3$, integrating \eqref{DE1} from $t_0$ to $t$ we obtain 
$$\int_{t_0}^t s(\Phi(x(s))-\min\Phi)ds \leq 
  \frac{1}{\alpha-3}(\mathcal E_{\alpha-1,0}(t_0)-\mathcal E_{\alpha-1,0}(t))\le \frac{1}{\alpha-3}\mathcal E_{\alpha-1,0}(t_0),$$
which allows us to conclude.
\end{proof}

\begin{remark} 
It would be interesting to know whether $\alpha=3$ is critical for the convergence rate given above.
\end{remark}

\begin{remark}
For the (first-order) steepest descent dynamical system, the typical rate of convergence is $\mathcal O(1/t)$ (see, for instance, \cite[Section 3.1]{Pey_Sor}). For the second-order system \eqref{edo01}, we have obtained a rate of $\mathcal O(1/t^2)$. It would be interesting to know whether higher-order systems give the corresponding rates of convergence. Another challenging question is the convergence rate of the trajectories defined by differential equations involving fractional time derivatives, as well as integro-differential equations.
\end{remark}

\subsection{Some examples and counterexamples}

A convergence rate of $\mathcal O(1/t^2)$ may be attained, even if $\argmin \Phi = \emptyset$ and $\alpha<3$. This is illustrated in the following example:

\begin{example}
Let $\mathcal H = \mathbb R$ and take $\Phi (x) = \frac{\alpha -1}{2}e^{-2x}$ 
with $\alpha\geq1$. Let us verify that $x(t)= \ln t$ is a solution of 
(\ref{edo01}). On the one hand,
$$
\ddot{x}(t) + \frac{\alpha}{t} \dot{x}(t) = \frac{\alpha -1}{t^2} .
$$
On the other hand, $\nabla \Phi (x)= -(\alpha -1)e^{-2x}$ which gives
$$
 \nabla \Phi (x(t))= -(\alpha -1)e^{-2 \ln t}=  -\frac{\alpha -1}{t^2} .
$$
Thus, $x(t)= \ln t$ is a solution of (\ref{edo01}). Let us examine the minimizing property. We have $\inf \Phi = 0$, and
$$
\Phi (x(t)) = \frac{\alpha -1}{2}e^{-2 \ln t}= \frac{\alpha -1}{t^2} .
$$
\end{example}

Therefore, one may wonder whether the rapid convergence of the values is true in general. The following example shows that this is not the case:

\begin{example}\label{X:xtheta}
Let $\mathcal H = \mathbb R$ and take $\Phi (x) = \frac{c}{x^{\theta}}$ , with 
$\theta>0$ , $\alpha\geq\frac{\theta}{(2+\theta)}$ and 
$c= \frac{2( 2\alpha + \theta (\alpha -1))}{\theta ( 2+ \theta )^2}$.
Let us verify that $x(t)=  t^{\frac{2}{2 + \theta}}$ is a solution of 
\eqref{edo01}. On the one hand,
$$
\ddot{x}(t) + \frac{\alpha}{t} \dot{x}(t) = \frac{2}{(2+ \theta)^2}(2\alpha + \theta (\alpha -1)) t^{- \frac{2(1+ \theta)}{2 + \theta}}.
$$
On the other hand, $\nabla \Phi (x)= -c \theta x^{-\theta -1} $ which gives
$$
 \nabla \Phi (x(t))= -c \theta t^{- \frac{2(1+ \theta)}{2 + \theta}}= -\frac{2}{(2+ \theta)^2}(2\alpha + \theta (\alpha -1)) t^{- \frac{2(1+ \theta)}{2 + \theta}}.
$$
Thus, $x(t)=  t^{\frac{2}{2 + \theta}}$ is solution of \eqref{edo01}. Let us examine the minimizing property. We have $\inf \Phi = 0$, and
$$
\Phi (x(t)) = c \frac{1}{t^{\frac{2 \theta}{2 + \theta}}}\ , 
  \mbox{ with }\ \frac{2 \theta}{2 + \theta}<2.
$$
\end{example}

We conclude that the order of convergence may be strictly slower than $\mathcal O(1/t^2)$ when $\argmin\Phi=\emptyset$. In the Example \ref{X:xtheta}, this occurs no matter how large $\alpha$ is. The speed of convergence of $\Phi (x(t))$ to $\inf \Phi$ depends on the behavior of $\Phi (x)$ as $\| x\| \to +\infty$. The above examples suggest that, when $\Phi (x)$ decreases rapidly and attains its infimal value as $\| x\| \to \infty$, we can expect fast convergence of $\Phi (x(t))$.

\bigskip

Even when $\argmin\Phi\neq\emptyset$, $\mathcal O (1/t^2)$ is the worst possible case for the rate of convergence, attained as a limit in the following example:

\begin{example}
Take $\mathcal H = \mathbb R$ and $\Phi (x) = c|x|^{\gamma}$, where $c$ and $\gamma$ are positive parameters. Let us look for nonnegative solutions of \eqref{edo01} of the form $x(t)= \frac{1}{t^{\theta}}$, with $\theta >0$. This means that the trajectory is not oscillating, it is a completely damped trajectory. We begin by determining the values of $c$, $\gamma$ and $\theta$ that provide such solutions. On the one hand,
$$
\ddot{x}(t) + \frac{\alpha}{t} \dot{x}(t) = \theta (\theta +1 -\alpha) \frac{1}{t^{\theta+ 2}}.
$$
On the other hand, $\nabla \Phi (x)= c \gamma |x|^{\gamma -2}x $, which gives
$$
\nabla \Phi (x(t))=  c \gamma \frac{1}{t^{\theta (\gamma -1)}}.
$$
Thus, $x(t)= \frac{1}{t^{\theta}}$ is solution of \eqref{edo01} if, and only if,
\begin{itemize}
	\item [i)] $\theta+ 2 = \theta (\gamma -1)$, which is equivalent to $\gamma >2$ and $\theta= \frac{2}{\gamma -2}$; and
	\item [ii)] $c \gamma = \theta (\alpha -\theta -1)$, which is equivalent to $\alpha > \frac{\gamma}{\gamma -2}$ and $c= \frac{2}{\gamma(\gamma -2)}( \alpha - \frac{\gamma}{\gamma -2})$.
\end{itemize}
We have $\min\Phi = 0$ and
$$
\Phi (x(t)) =\frac{2}{\gamma(\gamma -2)}( \alpha - \frac{\gamma}{\gamma -2}  )  \frac{1}{t^{\frac{2 \gamma}{\gamma -2 }}}.
$$

The speed of convergence of $\Phi (x(t))$ to $0$ depends on the parameter $\gamma$. As $\gamma$ tends to infinity, the exponent $\frac{2 \gamma}{\gamma -2 }$ tends to $2$. This limiting situation is obtained by taking  a function $\Phi$ that becomes very flat around the set of its minimizers. Therefore, without other geometric assumptions on $\Phi$, we cannot expect a convergence rate better than $\mathcal O (1/t^2)$.
By contrast, in Section \ref{S:strong}, we will show better rates of convergence under some geometrical assumptions, like strong convexity of $\Phi$.
\end{example}

\subsection{Weak convergence of the trajectories}

In this subsection, we show the convergence of the solutions of \eqref{edo01}, provided $\alpha>3$. We begin by establishing some preliminary estimations that cannot be derived from the analysis carried out in \cite{SBC}. The first statement improves part v) of Theorem \ref{Thm-weak-conv2}, while the second one is the key to proving the convergence of the trajectories of \eqref{edo01}:

\begin{Theorem}\label{T:alpha3}
Let $x:[t_0,+\infty[ \to\mathcal H$ be a solution of \eqref{edo01} with $\argmin\Phi\neq\emptyset$. 
\begin{itemize}
	\item [i)] If $\alpha\ge 3$ and $x$ is bounded, then
	$\|\dot x(t)\|=\mathcal O(1/t)$. More precisely, 
	\begin{equation}\label{bornedx}
	\|\dot x(t)\| \leq \frac{1}{t}  \left(\sqrt{2\mathcal E_{\alpha-1,0}(t_0)} + (\alpha-1)\sup_{t\geq t_0}\|x(t)-x^*\|\right).
	\end{equation}
	\item [ii)] If $\alpha>3$, then $x$ is bounded and 
	\begin{equation}\label{energy1}
	\int_{t_0}^{+\infty}t\|\dot x(t)\|^2\,dt\le \frac{\mathcal E_{2,2(\alpha-3)}(t_0)}{\alpha-3}<+\infty.
	\end{equation}
\end{itemize}
\end{Theorem}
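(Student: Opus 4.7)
The plan is to exploit the family of Lyapunov-type energies $\mathcal E_{\lambda,\xi}$ from Subsection~\ref{DefEnerg} with two judicious choices of parameters---one recovering the Su--Boyd--Cand\`es energy, one new.

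For part (i), I would use $\mathcal E_{\alpha-1,0}$, the same energy that drives Theorem~\ref{T:SBC}. That proof already shows $\mathcal E_{\alpha-1,0}$ is nonincreasing when $\alpha\ge 3$; hence, in particular,
$$\frac{1}{2}\bigl\|(\alpha-1)(x(t)-x^*)+t\dot x(t)\bigr\|^2\le\mathcal E_{\alpha-1,0}(t)\le\mathcal E_{\alpha-1,0}(t_0).$$
The triangle inequality then yields
$$t\|\dot x(t)\|\le\sqrt{2\,\mathcal E_{\alpha-1,0}(t_0)}+(\alpha-1)\|x(t)-x^*\|,$$
and the assumed boundedness of $x$ closes the argument with the explicit constant appearing in \eqref{bornedx}.

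For part (ii), I would select $(\lambda,\xi)$ so that the estimate of Remark~\ref{R:E_decreasing} collapses to a pure $-t\|\dot x\|^2$ dissipation. The coefficient $(2-\lambda)$ of $t(\Phi(x)-\min\Phi)$ forces $\lambda=2$, after which the cross-term coefficient $\xi-\lambda(\alpha-\lambda-1)=\xi-2(\alpha-3)$ forces $\xi=2(\alpha-3)$; these are precisely the indices occurring in the statement. Remark~\ref{R:E_decreasing} then delivers
$$\frac{d}{dt}\mathcal E_{2,2(\alpha-3)}(t)\le-(\alpha-3)\,t\,\|\dot x(t)\|^2,$$
so $\mathcal E_{2,2(\alpha-3)}$ is nonincreasing for $\alpha>3$. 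Since this energy contains the nonnegative summand $(\alpha-3)\|x(t)-x^*\|^2$, I extract
$$\|x(t)-x^*\|^2\le\frac{\mathcal E_{2,2(\alpha-3)}(t_0)}{\alpha-3},$$
which proves that $x$ is bounded; integrating the differential inequality from $t_0$ to $+\infty$ and using $\mathcal E_{2,2(\alpha-3)}\ge 0$ yields the claimed integral bound.

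The technical content is light once the energies are chosen; the main point---and the place where care is needed---is to identify the correct second energy. The novelty relative to \cite{SBC} is the extra $\frac{\xi}{2}\|x-x^*\|^2$ summand in $\mathcal E_{\lambda,\xi}$, which is essential for deducing the boundedness of $x$ directly from the monotonicity of the Lyapunov function rather than assuming it a priori, and which simultaneously kills the cross term in \eqref{dElambdaxi} exactly when $\xi=\lambda(\alpha-\lambda-1)$.
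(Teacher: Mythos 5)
Your proposal is correct and follows essentially the same route as the paper: part (i) uses the monotone energy $\mathcal E_{\alpha-1,0}$ plus the triangle inequality exactly as in the paper's proof, and part (ii) uses Remark~\ref{R:E_decreasing} with the same choice $\lambda=2$, $\xi=2(\alpha-3)$ to get the dissipation inequality, the bound $\|x(t)-x^*\|^2\le\mathcal E_{2,2(\alpha-3)}(t_0)/(\alpha-3)$, and the integral estimate by integration. No gaps; this matches the paper's argument step for step.
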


\begin{proof}
To prove i), assume $\alpha\ge 3$ and $x$ is bounded. From the definition of $\Elambdaxi$, we have $\frac{1}{2}\|\lambda(x-x^*)+t\dot x\|^2\leq\Elambdaxi(t)$, and so $\|t\dot x\| \leq \sqrt{2\Elambdaxi(t)}+\lambda\|x-x^*\|$. By Remark \ref{R:E_decreasing}, $\mathcal E_{\alpha-1,0}$ is nonincreasing, and we immediately obtain \eqref{bornedx}.

In order to show ii), suppose now that $\alpha>3$. Choose $\lambda=2$ and $\xi^*=2(\alpha-3)$. By Remark \ref{R:E_decreasing}, we have
\begin{equation}\label{DE2}
\frac{d}{dt}\mathcal E_{\lambda,\xi^*}(t) \leq - (\alpha-3)\,t\,\|\dot x\|^2,
\end{equation}
and $\mathcal E_{\lambda,\xi^*}$ is nonincreasing. From the definition of $\Elambdaxi$, we deduce that 
$\|x(t)-x^*\|^2 \leq \frac{2}{\xi} \Elambdaxi (t)$, which gives
\begin{equation} \label{E:x_bounded}
\|x(t)-x^*\|^2\leq \frac{\mathcal E_{2,2(\alpha-3)}(t)}{\alpha-3}\le\frac{\mathcal E_{2,2(\alpha-3)}(t_0)}{\alpha-3},
\end{equation}
and establishes de boundedness of $x$. Integrating \eqref{DE2} from $t_0$ to $t$, and recalling that $\mathcal E_{\lambda,\xi^*}$ is nonnegative, we obtain 
$$\int_{t_0}^t s\|\dot x(s)\|^2 ds \leq \frac{\mathcal E_{2,2(\alpha-3)}(t_0)}{\alpha-3},$$
as required.
\end{proof}

\begin{remark}
In view of \eqref{bornedx} and \eqref{E:x_bounded}, when $\alpha >3$,  we obtain the following explicit bound for $\|\dot x\|$, namely 
$$\|\dot x(t)\| \leq \frac{1}{t}\left(\sqrt{2\mathcal E_{\alpha-1,0}(t_0)} + (\alpha-1)\sqrt{\frac{\mathcal E_{2,2(\alpha-3)}(t_0)}{\alpha-3}}\right).$$
Since $\lim_{t\to+\infty}\|\dot x(t)\|=0$ by Theorem \ref{Thm-weak-conv2}, we also have $\lim_{t\to+\infty}t\,\|\dot x(t)\|^2=0$.
\end{remark}

We are now in a position to prove the weak convergence of the trajectories of \eqref{edo01}, which is the main result of this section:

\begin{Theorem}\label{T:weak_convergence}
Let $\Phi : \mathcal H \rightarrow \mathbb R$ be a continuously differentiable convex function. Let $\argmin\Phi\neq\emptyset$ and let 
$x:[t_0,+\infty[\to\mathcal H$ be a solution of \eqref{edo01} with $\alpha>3$. 
Then $x(t)$ converges weakly, as $t\to+\infty$, to a point in $\argmin\Phi$.
\end{Theorem}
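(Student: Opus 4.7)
The plan is to apply Opial's lemma for weak convergence in Hilbert space, which requires two ingredients: (a) for every $z\in\argmin\Phi$, the limit $\lim_{t\to+\infty}\|x(t)-z\|$ exists; and (b) every weak sequential cluster point of the trajectory $x(\cdot)$ belongs to $\argmin\Phi$. Condition (b) is already at our disposal: it is precisely part ii) of Theorem \ref{Thm-weak-conv2}. Hence the entire task is to establish (a).

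To prove (a), I fix $z\in\argmin\Phi$ and work with the anchoring function $h_z(t)=\frac{1}{2}\|x(t)-z\|^2$ from \eqref{E:h_z}. The starting point is inequality \eqref{E:ineq_h_z}: since $\Phi(x(t))-\Phi(z)\geq 0$, it reduces to the clean second-order differential inequality
\[
\ddot h_z(t)+\frac{\alpha}{t}\,\dot h_z(t)\leq \|\dot x(t)\|^2.
\]
The extra ingredient that makes the argument work is the integral bound $\int_{t_0}^{+\infty}t\,\|\dot x(t)\|^2\,dt<+\infty$ furnished by Theorem \ref{T:alpha3} ii), which is precisely where the hypothesis $\alpha>3$ is exploited.

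Next, I introduce $p(t):=[\dot h_z(t)]^+$. Since $x\in C^2$, $\dot h_z$ is $C^1$ and $p$ is locally Lipschitz; the chain rule for the positive part, combined with Stampacchia's lemma on $\{\dot h_z=0\}$, yields $\dot p=\ddot h_z\,\mathbf{1}_{\{\dot h_z>0\}}$ almost everywhere, so that $\dot p+\frac{\alpha}{t}\,p\leq \|\dot x\|^2$ a.e. Multiplying by $t$ and using $(tp)'=p+t\dot p$ gives
\[
(tp)'(t)+(\alpha-1)\,p(t)\leq t\,\|\dot x(t)\|^2.
\]
Integrating from $t_0$ to $t$, dropping the nonnegative boundary term $tp(t)$, and invoking $\alpha>3>1$ together with the bound from Theorem \ref{T:alpha3} ii), I obtain $\int_{t_0}^{+\infty}[\dot h_z(s)]^+\,ds<+\infty$. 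Because $h_z\geq 0$, the integrability of the positive part of $\dot h_z$ forces the integrability of $[\dot h_z]^-$ as well, and so $\lim_{t\to+\infty}h_z(t)$ exists. This is property (a), and Opial's lemma then delivers the weak convergence of $x(t)$ to some element of $\argmin\Phi$.

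The step I expect to require the most care is the manipulation of $p=[\dot h_z]^+$: the chain-rule identity and the resulting differential inequality hold only in the almost-everywhere sense, and one must invoke Stampacchia's lemma on the level set $\{\dot h_z=0\}$ to justify $\dot p=0$ a.e. there, as well as verify that $p$ is absolutely continuous so that the integration by parts leading to $(tp)'+(\alpha-1)p\leq t\|\dot x\|^2$ is legitimate. Once this technical point is settled, the remainder is a routine combination of integrations and the quantitative energy estimates already assembled in Theorem \ref{T:alpha3}.
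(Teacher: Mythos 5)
Your proof is correct and follows essentially the same route as the paper: Opial's lemma, with condition (ii) supplied by part ii) of Theorem \ref{Thm-weak-conv2}, and condition (i) obtained from the anchored inequality $t\ddot h_z(t)+\alpha\dot h_z(t)\le t\|\dot x(t)\|^2$ combined with the estimate $\int_{t_0}^{+\infty}t\|\dot x(t)\|^2\,dt<+\infty$ of Theorem \ref{T:alpha3} ii), which is exactly where $\alpha>3$ enters. The only deviation is that where the paper invokes its Lemma \ref{basic-edo} (multiplying by $t^{\alpha-1}$, integrating and applying Fubini to get $[\dot h_z]_+\in L^1$ and hence the limit), you reprove that auxiliary step directly via $p=[\dot h_z]^+$ and the inequality $(tp)'+(\alpha-1)p\le t\|\dot x\|^2$; this variant is sound (the a.e.\ inequality on $\{\dot h_z\le 0\}$ works because the right-hand side is nonnegative, and $p$ is locally Lipschitz since $\dot h_z$ is $C^1$) and yields the same conclusion.
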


\begin{proof}
We shall use Opial's Lemma \ref{Opial}. To this end, let $x^*\in\argmin\Phi$ and recall from \eqref{E:ineq_h_z} that
$$
\ddot{h}_{x^*}(t) + \frac{\alpha}{t} \dot{h}_{x^*}(t) +\Phi (x(t)) - \min\Phi  \leq \|\dot{x}(t)\|^2,
$$
where $h_z$ is given by \eqref{E:h_z}. This yields
$$t\ddot{h}_{x^*}(t) + \alpha\dot{h}_{x^*}(t)\leq t\|\dot{x}(t)\|^2.$$
In view of  Theorem \ref{T:alpha3}, part ii),
the right-hand side is integrable on $[t_0,+\infty[$. Lemma \ref{basic-edo} then implies $\lim_{t\to+\infty}h_{x^*}(t)$ exists. This gives the first hypothesis in Opial's Lemma. The second one was established in part ii) of Theorem \ref{Thm-weak-conv2}.
\end{proof}

\begin{remark} 
A puzzling question concerns the convergence of the trajectories for $\alpha=3$, a question which is directly related to the convergence of the sequences generated by Nesterov's method. 
\end{remark}

\subsection{Further stabilization results}

Let us complement the study of equation \eqref{edo01} by examining the asymptotic behavior of the acceleration $\ddot{x}$. To this end, we shall use an additional regularity assumption on the gradient of $\Phi$.

\begin{proposition}\label{P:acceleration}
Let $\alpha>3$ and let $x:[t_0,+\infty[ \to\mathcal H$ be a solution of \eqref{edo01} with $\argmin\Phi\neq\emptyset$. Assume $\nabla\Phi$ Lipschitz-continuous on bounded sets. Then $\ddot x$ is bounded, globally Lipschitz continuous on $[t_0,+\infty[$, and satisfies 
$$\lim_{t\to+\infty}\frac{1}{t^\alpha} \int_{t_0}^t s^\alpha \|\ddot x(s)\|^2 ds =0.$$
\end{proposition}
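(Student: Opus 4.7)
The plan is to derive all three assertions from the pointwise identity $\ddot x(t) = -\frac{\alpha}{t}\dot x(t) - \nabla\Phi(x(t))$ furnished by \eqref{edo01}, combined with the a priori bounds supplied by Theorem \ref{T:alpha3} (namely $x$ bounded and $\|\dot x(t)\|=\mathcal O(1/t)$) and Theorem \ref{T:SBC} (namely $\Phi(x(t))-\min\Phi=\mathcal O(1/t^2)$).

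Boundedness of $\ddot x$ is immediate from the identity: the term $\frac{\alpha}{t}\dot x$ is of order $1/t^2$, and since the trajectory stays in a bounded set $K$ and $\nabla\Phi$ is continuous on $K$, the composition $\nabla\Phi(x(t))$ is bounded. For global Lipschitz continuity of $\ddot x$, I would decompose the right-hand side into two pieces. First, the map $t\mapsto\nabla\Phi(x(t))$ is Lipschitz because $x$ is Lipschitz (its derivative being bounded) and $\nabla\Phi$ is Lipschitz on the bounded set $K$ by hypothesis. Second, $t\mapsto\frac{1}{t}\dot x(t)$ is Lipschitz because $\dot x$ is Lipschitz (now that $\ddot x$ is known bounded), $\dot x$ is bounded, and $t\mapsto 1/t$ is bounded and Lipschitz on $[t_0,+\infty[$; splitting $\frac{\dot x(t)}{t}-\frac{\dot x(s)}{s}=\frac{1}{t}(\dot x(t)-\dot x(s))+(\frac{1}{t}-\frac{1}{s})\dot x(s)$ and using these facts gives the estimate.

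For the integral estimate, the central observation is the \emph{pointwise} bound $\|\ddot x(t)\|^2=\mathcal O(1/t^2)$. To produce it, I would combine Theorem \ref{T:SBC} with the descent lemma in the form $\|\nabla\Phi(x(t))\|^2\le 2L\bigl(\Phi(x(t))-\min\Phi\bigr)$, which yields $\|\nabla\Phi(x(t))\|^2=\mathcal O(1/t^2)$. Together with $\|\frac{\alpha}{t}\dot x(t)\|^2=\mathcal O(1/t^4)$ from Theorem \ref{T:alpha3}(i), squaring \eqref{edo01} gives $\|\ddot x(t)\|^2=\mathcal O(1/t^2)$. Therefore $s^\alpha\|\ddot x(s)\|^2=\mathcal O(s^{\alpha-2})$, so $\int_{t_0}^t s^\alpha\|\ddot x(s)\|^2\,ds=\mathcal O(t^{\alpha-1})$, and dividing by $t^\alpha$ yields a bound of order $1/t$, which tends to $0$ as $t\to+\infty$.

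The main obstacle is justifying the descent lemma under the weaker hypothesis that $\nabla\Phi$ is Lipschitz only on bounded sets: the standard proof requires $\nabla\Phi$ to be Lipschitz on a convex set that contains both $x(t)$ and the auxiliary point $y(t)=x(t)-\frac{1}{L}\nabla\Phi(x(t))$. I would resolve this by picking a closed ball $\overline{B(0,\rho)}$ large enough to contain the trajectory \emph{and} all such auxiliary points, and choosing $L$ to be the Lipschitz constant of $\nabla\Phi$ on that ball. Because $x(\cdot)$ is bounded and $\nabla\Phi(x(\cdot))$ is bounded (from the first two steps), a radius $\rho$ satisfying $\rho\ge\sup_t\|x(t)\|+\sup_t\|\nabla\Phi(x(t))\|/L(\rho)$ exists and fixes the Lipschitz constant consistently; this is the only place where the local-Lipschitz hypothesis on $\nabla\Phi$ is actually invoked.
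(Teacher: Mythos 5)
Your proposal is correct, but it proves the integral estimate by a genuinely different route than the paper. The paper never obtains a pointwise decay of $\ddot x$: it works with the anchored function $h_{x^*}$, invokes the Baillon--Haddad theorem to get cocoercivity of $\nabla\Phi$ on a ball containing the trajectory, multiplies the resulting differential inequality by $t^\alpha$, integrates twice, applies Fubini's theorem together with the energy estimate $\int_{t_0}^{\infty}t\|\dot x(t)\|^2dt<+\infty$ from Theorem \ref{T:alpha3}(ii), and finally concludes that $g(t)=t^{-\alpha}\int_{t_0}^t s^\alpha\|\ddot x(s)\|^2ds$ tends to $0$ via Lemma \ref{lm:aux} (using $g\in L^1$ and $\dot g$ bounded, the latter coming from the boundedness of $\ddot x$). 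You instead derive the pointwise bound $\|\ddot x(t)\|=\mathcal O(1/t)$ by combining $\|\dot x(t)\|=\mathcal O(1/t)$ from Theorem \ref{T:alpha3}(i) with $\|\nabla\Phi(x(t))\|^2\le 2L\big(\Phi(x(t))-\min\Phi\big)=\mathcal O(1/t^2)$, the latter from the descent lemma and Theorem \ref{T:SBC}, and then integrate directly. This is shorter, avoids cocoercivity, Fubini and Lemma \ref{lm:aux} entirely, and in fact yields a quantitatively stronger conclusion: you get $t^{-\alpha}\int_{t_0}^t s^\alpha\|\ddot x(s)\|^2ds=\mathcal O(1/t)$, hence the normalized ergodic average is $\mathcal O(1/t^2)$, improving the $o(1/t)$ stated in the paper's remark; what it costs is the additional use of Theorem \ref{T:SBC} and the careful choice of the ball on which the descent lemma is applied --- a point you rightly flag, and which is handled exactly as with the paper's use of Baillon--Haddad (fix $\rho=\sup_t\|x(t)\|+\sup_t\|\nabla\Phi(x(t))\|$ and replace the Lipschitz constant on that ball by $\max\{1,L\}$, so that the auxiliary point $x(t)-\frac1L\nabla\Phi(x(t))$ stays in the ball; your self-referential inequality in $\rho$ is unnecessary). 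Two small touch-ups: boundedness of $\nabla\Phi(x(\cdot))$ should be drawn from the Lipschitz-on-bounded-sets hypothesis rather than mere continuity, since in an infinite-dimensional $\mathcal H$ a continuous map need not be bounded on bounded (non-compact) sets; and note that Theorem \ref{T:alpha3}(i) presupposes the trajectory bounded, which for $\alpha>3$ is supplied by part (ii), as in the paper. The boundedness and global Lipschitz continuity of $\ddot x$ are argued exactly as in the paper.
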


\begin{proof}
First recall that $x$ and $\dot x$ are bounded, by virtue of Theorems \ref{T:alpha3} and \ref{Thm-weak-conv2}, respectively. 
By \eqref{edo01}, we have
\begin{equation}\label{scale-energy7}
\ddot{x}(t) = -\frac{\alpha}{t} \dot{x}(t) - \nabla \Phi (x(t)).
\end{equation}
Since $\nabla \Phi$ is Lipschitz-continuous on bounded sets, it follows from \eqref{scale-energy7}, and the boundedness of $x$ and $\dot x$, that $\ddot {x}$ is bounded on $[t_0, +\infty[$. As a consequence, $\dot {x}$ is Lipschitz-continuous on $[t_0, +\infty[$. Returning to \eqref{scale-energy7}, we deduce that $\ddot{x}$ is Lipschitz-continuous on $[t_0, +\infty[$.

Pick $x^*\in\argmin\Phi$, set $h=h_{x^*}$ (to simplify the notation) and use \eqref{wconv3} to obtain  
\begin{equation} \label{E:h_x*}
\ddot{h}(t) + \frac{\alpha}{t} \dot{h}(t) +  \langle x(t) - x^{*} , \nabla \Phi (x(t)) \rangle =\| \dot{x}(t) \|^2.
\end{equation} 
Let $L$ be a Lipschitz constant for $\nabla\Phi$ on some ball containing the minimizer $x^*$ and the trajectory $x$. 
By virtue of the Baillon-Haddad Theorem (see, for instance, \cite{BaHa},  
\cite[Theorem 3.13]{Pey} or \cite[Theorem 2.1.5]{Nest2}), $\nabla \Phi$ is $\frac{1}{L}$-cocoercive on that ball, which means that 
$$\langle x(t) - x^{*} , \nabla \Phi (x(t))- \nabla \Phi (x^{*})\rangle \geq \frac{1}{L} \| \nabla \Phi (x(t))- \nabla \Phi (x^{*}) \|^2.$$
Substituting this inequality in \eqref{E:h_x*}, and using the fact that $\nabla \Phi (x^{*}) =0$, we obtain
$$ \ddot{h}(t) + \frac{\alpha}{t} \dot{h}(t) + \frac{1}{L} \| \nabla \Phi (x(t)) \|^2  \leq \| \dot{x}(t) \|^2.$$
In view of \eqref{scale-energy7}, this gives
$$\ddot{h}(t) + \frac{\alpha}{t} \dot{h}(t) + \frac{1}{L} \| \ddot{x}(t) +\frac{\alpha}{t} \dot{x}(t) \|^2  \leq \| \dot{x}(t) \|^2.$$
Developing the square on the left-hand side, and neglecting the nonnegative term $(\alpha\|\dot x(t)\|/t)^2/L$, we obtain
$$
\ddot{h}(t) + \frac{\alpha}{t} \dot{h}(t) + \frac{1}{L}\|\ddot{x}(t)\|^2 + \frac{\alpha}{Lt}\frac{d}{dt}\|\dot{x}(t)\|^2 \leq \|\dot{x}(t)\|^2.
$$
We multiply this inequality by $t^\alpha$ to obtain
$$
\frac{d}{dt}\left( t^{\alpha}\dot{h}(t)\right) + \frac{1}{L} t^{\alpha}\|\ddot{x}(t)\|^2 + \frac{\alpha}{L}t^{\alpha-1}\frac{d}{dt}\|\dot{x}(t)\|^2  \leq t^{\alpha} \|\dot{x}(t)\|^2.
$$
Integration from $t_0$ to $t$ yields
$$
t^{\alpha}  \dot{h}(t) - t_0^{\alpha}  \dot{h}(t_0)  +  \frac{1}{L} \int_{t_0}^t  s^{\alpha}\| \ddot{x}(s) \|^2  ds + \frac{\alpha}{L}\Big ( t^{\alpha -1} \|\dot{x}(t) \|^2  
-  {t_0}^{\alpha -1} \|\dot{x}(t_0) \|^2  - (\alpha -1)\int_{t_0}^t \|\dot{x}(s) \|^2 s^{\alpha -2} ds
  \Big ) \leq \int_{t_0}^t   s^{\alpha} \| \dot{x}(s) \|^2 ds.
$$
Neglecting the nonnegative term $\alpha t^{\alpha-1}\|\dot x(t)\|^2/L$, we 
obtain
\begin{equation} \label{E:stabilization}
t^\alpha\dot h(t) + \frac{1}{L} \int_{t_0}^t s^{\alpha}\|\ddot{x}(s)\|^2 ds 
  \leq C + (\alpha -1)\int_{t_0}^t \|\dot{x}(s) \|^2 s^{\alpha -2} ds
  + \int_{t_0}^t   s^{\alpha} \| \dot{x}(s) \|^2 ds,
\end{equation} 
where $C=t_0^\alpha\dot h(t_0)+\alpha t_0^{\alpha-1}\|\dot x(t_0)\|^2/L$.

If $t_0<1$, we have 
$$\frac{1}{t^\alpha} \int_{t_0}^t s^\alpha \|\ddot x(s)\|^2 ds 
= \frac{1}{t^\alpha} \int_{t_0}^1 s^\alpha \|\ddot x(s)\|^2 ds
  + \frac{1}{t^\alpha} \int_1^t s^\alpha \|\ddot x(s)\|^2 ds$$
for all $t\geq 1$. Since the first term on the right-hand side tends to $0$ as $t\to+\infty$, we may assume, without loss of generality, that $t_0\ge 1$.

Observe now that $s^{\alpha -2} \leq s^{\alpha}$, whenever $s\geq1$. Whence, inequality \eqref{E:stabilization} simplifies to
$$t^\alpha\dot h(t) + \frac{1}{L} \int_{t_0}^t s^{\alpha}\|\ddot{x}(s)\|^2 ds 
  \leq C + \alpha \int_{t_0}^t   s^{\alpha} \| \dot{x}(s) \|^2 ds.$$
Dividing by $t^\alpha$ and integrating again, we obtain
$$
 h(t)-h(t_0)  +  \frac{1}{L}   \int_{t_0}^t  \tau^{-\alpha } \left( \int_{t_0}^\tau  {s}^{\alpha}\| \ddot{x}(s) \|^2  ds \right) d\tau
  \leq  \frac{C}{\alpha-1}(t_0^{-\alpha+1}-t^{-\alpha+1})  
    +\alpha \int_{t_0}^t  \tau^{-\alpha } \left( \int_{t_0}^\tau  {s}^{\alpha}\| \dot{x}(s) \|^2  ds \right) d\tau.
$$
Setting $C'=h(t_0)+C t_0^{-\alpha+1}/(\alpha-1)$, and neglecting the 
nonnegative term $h(t)$ of the left-hand side and the nonpositive term 
$-C t^{-\alpha+1}/(\alpha-1)$ of the right-hand side, we get  
$$
 \frac{1}{L}   \int_{t_0}^t  \tau^{-\alpha } \left( \int_{t_0}^\tau  {s}^{\alpha}\| \ddot{x}(s) \|^2  ds \right) d\tau
  \leq  C'
    +\alpha \int_{t_0}^t  \tau^{-\alpha } \left( \int_{t_0}^\tau  {s}^{\alpha}\| \dot{x}(s) \|^2  ds \right) d\tau.
$$
Set $\displaystyle g(\tau)= \tau^{-\alpha}\left(\int_{t_0}^\tau s^\alpha\|\ddot x(s)\|^2 ds\right)$ 
and use Fubini's Theorem on the second integral to get
$$\frac{1}{L}\int_{t_0}^t g(\tau)d\tau \leq  C' + \frac{\alpha}{\alpha-1}\int_{t_0}^t s^\alpha\|\dot x(s)\|^2 (s^{-\alpha+1}-t^{-\alpha+1})\ ds  
\leq  C' + \frac{\alpha}{\alpha-1}\int_{t_0}^t s\|\dot x(s)\|^2 ds. $$
By part ii) of Theorem \ref{T:alpha3}, the integral on the right-hand side is finite. We have 
\begin{equation}\label{gl1}
\int_{t_0}^{+\infty} g(\tau)d\tau < +\infty.
\end{equation}
The derivative of $g$ is
$$\dot g(\tau) =   - \alpha\tau^{-\alpha-1}\int_{t_0}^\tau s^\alpha\|\ddot x(s)\|^2 ds   +\|\ddot x(\tau)\|^2.$$
Let $C''$ be an upper bound for $\|\ddot x\|^2$. We have 
\begin{equation}\label{glip}
|\dot g(\tau)| 
\leq C''\left(1+\alpha\tau^{-\alpha-1}\int_{t_0}^\tau s^\alpha ds\right) 
= C''\left(
  1+\frac{\alpha}{\alpha+1}\tau^{-\alpha-1}
    \left(\tau^{\alpha+1}-t^{\alpha+1}\right)
  \right) 
\leq C''\left(1+\frac{\alpha}{\alpha+1}\right).
\end{equation}
From \eqref{gl1} and \eqref{glip} we deduce that $\lim_{\tau\to+\infty}g(\tau)=0$ by virtue of Lemma \ref{lm:aux}.
\end{proof}

\begin{remark}
Since $\int_{t_0}^ts^\alpha ds= \frac{1}{\alpha+1}\left(t^{\alpha+1}-t_0^{\alpha+1}\right)$, Proposition 
\ref{P:acceleration} expresses a fast ergodic convergence of $\|\ddot x(s)\|^2$ 
to 0 with respect to the weight $s^\alpha$ as $t\to+\infty$, namely 
$$\frac{\int_{t_0}^t s^\alpha\|\ddot x(s)\|^2 ds}{\int_{t_0}^t s^\alpha ds} = 
  o\left(\frac{1}{t}\right).$$
\end{remark}

\section{Strong convergence results}\label{S:strong}

A counterexample due to Baillon \cite{Ba} shows that the trajectories of the steepest descent dynamical system may converge weakly but not strongly. Nevertheless, under some additional geometrical or topological assumptions on $\Phi$, the steepest descent trajectories do converge strongly. This has been proved in the case where the function $\Phi$ is either even or strongly convex (see \cite{Bruck}), or when $\interior(\argmin \Phi) \neq \emptyset$ (see \cite[theorem 3.13]{Bre1}). Some of these results have been extended to inertial dynamics, see \cite{Al} for the heavy ball with friction, and \cite{aabr} for an inertial version of Newton's method. This suggests that convexity alone may not be sufficient for the trajectories of \eqref{edo01} to converge strongly, but one can reasonably expect it to be the case under some additional conditions. The purpose of this section is to establish this fact. The different types of hypotheses will be studied in independent subsections s!
 ince different techniques are required.

\subsection{Set of minimizers with nonempty interior}

Let us begin by studying the case where $\interior(\argmin\Phi)\neq\emptyset$.

\begin{Theorem} \label{Thm-strong-int}
Let $\Phi : \mathcal H \rightarrow \mathbb R$ be a continuously differentiable convex function.  
Let $\interior(\argmin\Phi)\neq\emptyset$ and let $x:[t_0,+\infty[\to\mathcal H$ be a solution of \eqref{edo01} with $\alpha>3$. Then $x(t)$ converges strongly, as $t\to+\infty$, to a point in $\argmin\Phi$. Moreover,
$$\int_{t_0}^{\infty}  t\| \nabla \Phi (x(t))  \|dt < +\infty.$$
\end{Theorem}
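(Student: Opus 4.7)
The proof will have two stages: first establishing the auxiliary integrability $\int_{t_0}^{\infty} t\|\nabla\Phi(x(t))\|\,dt<+\infty$ (which is the second conclusion of the theorem), and then using it to upgrade the weak convergence from Theorem~\ref{T:weak_convergence} to strong convergence by integrating equation~\eqref{edo01} once.

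For the gradient integrability, the plan is to fix $\bar x\in\interior(\argmin\Phi)$ and $\rho>0$ with $B(\bar x,\rho)\subset\argmin\Phi$. Since $\nabla\Phi$ vanishes on this ball, monotonicity of $\nabla\Phi$ applied to $x(t)$ and $\bar x+\rho e$, for any unit vector $e$, followed by taking the supremum over $e$, yields the geometric inequality
\begin{equation*}
\rho\,\|\nabla\Phi(x(t))\|\le \langle \nabla\Phi(x(t)),x(t)-\bar x\rangle.
\end{equation*}
Combined with \eqref{wconv3} at $z=\bar x$, this gives $\ddot h_{\bar x}(t)+\frac{\alpha}{t}\dot h_{\bar x}(t)+\rho\|\nabla\Phi(x(t))\|\le \|\dot x(t)\|^2$. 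Multiplying by $t$, rewriting $t\ddot h_{\bar x}+\alpha\dot h_{\bar x}=\frac{d}{dt}(t\dot h_{\bar x})+(\alpha-1)\dot h_{\bar x}$, and integrating from $t_0$ to $T$, the boundary term $T\dot h_{\bar x}(T)$ stays bounded since $\|\dot x(T)\|=\mathcal O(1/T)$ and $x$ is bounded by Theorem~\ref{T:alpha3}; the term $(\alpha-1)h_{\bar x}(T)\ge 0$ may be dropped, and $\int_{t_0}^T t\|\dot x(t)\|^2\,dt$ is controlled by~\eqref{energy1}. Letting $T\to+\infty$ delivers the claim.

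For the strong convergence, the idea is to integrate \eqref{edo01} once. Writing $t\ddot x+\alpha \dot x=\frac{d}{dt}(t\dot x)+(\alpha-1)\dot x$ and integrating from $t_0$ to $t$ produces the first-order identity
\begin{equation*}
t\dot x(t)+(\alpha-1)x(t)=M(t),\qquad M(t):=t_0\dot x(t_0)+(\alpha-1)x(t_0)-\int_{t_0}^t s\nabla\Phi(x(s))\,ds.
\end{equation*}
Thanks to the first part, $s\mapsto s\nabla\Phi(x(s))$ is Bochner integrable on $[t_0,+\infty[$, so $M(t)$ converges strongly in $\mathcal H$ to some $M_\infty$. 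The identity above rewrites as the scalar linear ODE $\dot x+\frac{\alpha-1}{t}x=\frac{M(t)}{t}$; applying the integrating factor $t^{\alpha-1}$ and integrating yields
\begin{equation*}
x(t)=\frac{t_0^{\alpha-1}x(t_0)}{t^{\alpha-1}}+\frac{1}{t^{\alpha-1}}\int_{t_0}^t s^{\alpha-2}M(s)\,ds.
\end{equation*}
A Ces\`aro-type argument for the weighted average (splitting at a large time where $\|M(s)-M_\infty\|$ is uniformly small, and using $\int_{t_0}^t s^{\alpha-2}\,ds\sim t^{\alpha-1}/(\alpha-1)$) then gives $x(t)\to M_\infty/(\alpha-1)$ strongly. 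Combined with the weak convergence to some $x^\infty\in\argmin\Phi$ from Theorem~\ref{T:weak_convergence}, uniqueness of limits forces $x^\infty=M_\infty/(\alpha-1)$.

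The main obstacle is the passage from gradient integrability to strong convergence: since $\|\dot x(t)\|=\mathcal O(1/t)$ is not a priori integrable on $[t_0,+\infty[$, one cannot conclude directly that $x$ is Cauchy in norm. The crucial observation is that integrating \eqref{edo01} once converts the problem into a first-order linear equation whose forcing converges strongly, and this equation can then be solved explicitly to read off the strong limit of the trajectory.
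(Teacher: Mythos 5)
Your proposal is correct, and its overall architecture coincides with the paper's: the same geometric inequality $\langle\nabla\Phi(x(t)),x(t)-\bar x\rangle\ge\rho\|\nabla\Phi(x(t))\|$ derived from monotonicity and the vanishing of $\nabla\Phi$ on a ball inside $\argmin\Phi$, then integrability of $t\|\nabla\Phi(x(t))\|$, then strong convergence by viewing \eqref{edo01} as $t\ddot x+\alpha\dot x=-t\nabla\Phi(x(t))$ with integrable right-hand side, with the weak limit from Theorem \ref{T:weak_convergence} identifying the strong limit as a minimizer. The differences are in the bookkeeping. For the integrability step, the paper plugs the geometric inequality into the derivative \eqref{dElambdaxi} of the anchored energy $\mathcal E_{\alpha-1,0}$ and absorbs the remainder using $\int_{t_0}^\infty t(\Phi(x(t))-\min\Phi)\,dt<+\infty$ (Theorem \ref{T:SBC}, $\alpha>3$); since $\mathcal E_{\alpha-1,0}\ge 0$, no boundary terms need attention. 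You instead work with $h_{\bar x}$ and \eqref{wconv3}, absorbing the remainder with $\int_{t_0}^\infty t\|\dot x(t)\|^2\,dt<+\infty$ from \eqref{energy1}, at the price of having to control the boundary term $T\dot h_{\bar x}(T)$ — which you do correctly via the bound \eqref{bornedx} ($t\|\dot x(t)\|$ bounded, using boundedness of $x$ from Theorem \ref{T:alpha3}\,ii)). Both inputs require $\alpha>3$, so neither route is more general; yours is marginally more elementary in that it bypasses the energy $\mathcal E_{\alpha-1,0}$, the paper's is cleaner in that sign considerations are automatic. For the second step, the paper simply invokes its Lemma \ref{lemma-edo1} (itself proved by multiplying by $t^{\alpha-1}$, integrating twice, Fubini, and the Kronecker-type Lemma \ref{basic-int}), whereas you re-derive that lemma inline: integrating once to get $t\dot x(t)+(\alpha-1)x(t)=M(t)$ with $M(t)$ strongly convergent, then the integrating factor $t^{\alpha-1}$ and a Ces\`aro average with weights $s^{\alpha-2}$ (valid since $\alpha-1>0$) to read off $x(t)\to M_\infty/(\alpha-1)$. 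This is an equivalent computation, with the extra dividend that it exhibits the strong limit explicitly in terms of $M_\infty$.
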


\begin{proof}
Since $\interior(\argmin\Phi)\neq\emptyset$, there exist $x^*\in\argmin\Phi$ 
and some $\rho>0$ such that $\nabla\Phi(z)=0$ for all $z\in\mathcal H$ such 
that $\|z-x^*\|<\rho$. By the monotonicity of $\nabla\Phi$, for all 
$y\in\mathcal H$, we have 
$$
\langle \nabla \Phi (y), y- z \rangle \geq 0.
$$
Hence,
$$\langle \nabla \Phi (y), y-x^* \rangle \geq \langle \nabla \Phi (y), z-x^* \rangle .$$
Taking the supremum with respect to $z \in \mathcal H$ such that $\|z-x^*\| < \rho$, we infer that 
$$\langle \nabla \Phi (y), y-x^* \rangle \geq \rho \|\nabla \Phi (y)\|$$
for all $ y \in \mathcal H $. In particular,
$$
\langle \nabla \Phi ( x(t)),  x(t)-x^* \rangle \geq \rho \|\nabla \Phi ( x(t))\|.
$$
By using this inequality in \eqref{dElambdaxi} with $\lambda=\alpha-1$ and 
$\xi=0$, we obtain
$$
\frac{d}{dt}\mathcal E_{\alpha-1,0}(t)+(\alpha-1)\rho t\|\nabla\Phi(x(t))\| 
  \le 2\,t\big(\Phi(x(t))-\min\Phi\big),
$$
whence we derive, by integrating from $t_0$ to $t$
$$
\mathcal E_{\alpha-1,0}(t)-\mathcal E_{\alpha-1,0}(t_0)
+(\alpha-1)\rho\int_{t_0}^t s\|\nabla\Phi(x(s))\|ds 
  \le 2\int_{t_0}^t s\big(\Phi(x(s))-\min\Phi\big)\,ds.
$$
Since $\mathcal E_{\alpha-1,0}(t)$ is nonnegative, part ii) of Theorem \ref{T:SBC} gives
\begin{equation}\label{strong-conv-int8}
\int_{t_0}^{\infty}  t\| \nabla \Phi (x(t))  \|dt < +\infty.
\end{equation}
Finally, rewrite \eqref{edo01} as
\begin{equation*}
t\ddot{x}(t) + \alpha \dot{x}(t) = - t\nabla \Phi (x(t)).
\end{equation*}
Since the right-hand side is integrable, we conclude by applying Lemma \ref{lemma-edo1} and Theorem \ref{T:weak_convergence}.
\end{proof}

\subsection{Even functions}

Let us recall that  $\Phi : \mathcal H \to \mathbb R$ is {\it even} if $\Phi (-x)= \Phi (x) $ for every $x\in \mathcal H$. In this case the set $\argmin \Phi$ is nonempty and contains the origin.  
 
\begin{Theorem} \label{Thm-strong-conv}
Let $\Phi:\mathcal H\rightarrow\mathbb R$ be a continuously differentiable 
convex even function and let $x:[t_0,+\infty[\to\mathcal H$ be a solution of 
\eqref{edo01} with $\alpha>3$. Then $x(t)$ converges strongly, as 
$t\to+\infty$, to a point in $\argmin\Phi$. 
\end{Theorem}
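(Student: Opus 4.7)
My approach is to strengthen the weak convergence of Theorem~\ref{T:weak_convergence} to strong convergence by exploiting the symmetry that evenness imposes on $\argmin\Phi$. Let $x_\infty\in\argmin\Phi$ denote the weak limit of $x(t)$ given by Theorem~\ref{T:weak_convergence}. Since $\Phi(-y)=\Phi(y)$, the point $-x_\infty$ is also a minimizer, and hence so is $0=\tfrac{1}{2}\bigl(x_\infty+(-x_\infty)\bigr)$ by convexity of $\argmin\Phi$. These two canonical anchors, $0$ and $x_\infty$, will drive the argument.

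The first step is to reduce strong convergence to convergence of norms. Weak convergence already yields $\langle x(t),x_\infty\rangle\to\|x_\infty\|^2$, so from the identity
$$\|x(t)-x_\infty\|^2=\|x(t)\|^2-2\langle x(t),x_\infty\rangle+\|x_\infty\|^2,$$
strong convergence is equivalent to $\lim_{t\to\infty}\|x(t)\|^2=\|x_\infty\|^2$. Since $0\in\argmin\Phi$, part vi) of Theorem~\ref{Thm-weak-conv2} guarantees that $h_0(t)=\tfrac{1}{2}\|x(t)\|^2$ admits a finite limit $\bar h_0$; weak lower-semicontinuity of the norm gives $\|x_\infty\|^2\le 2\bar h_0$, so the crux is to establish the reverse inequality $2\bar h_0\le\|x_\infty\|^2$.

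The algebraic payoff of evenness comes from applying convexity at both $x_\infty$ and $-x_\infty$, using $\Phi(\pm x_\infty)=\min\Phi$. This yields simultaneously $\langle \nabla\Phi(x),x-x_\infty\rangle\ge\Phi(x)-\min\Phi$ and $\langle\nabla\Phi(x),x+x_\infty\rangle\ge\Phi(x)-\min\Phi$, whence
$$|\langle\nabla\Phi(x),x_\infty\rangle|\le\langle\nabla\Phi(x),x\rangle-\bigl(\Phi(x)-\min\Phi\bigr),$$
and equivalently the strengthened lower bound $\langle\nabla\Phi(x),x\rangle\ge(\Phi(x)-\min\Phi)+|\langle\nabla\Phi(x),x_\infty\rangle|$. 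I would then inject this refined inequality into the evolution equation
$$\ddot h_0(t)+\frac{\alpha}{t}\dot h_0(t)+\langle x(t),\nabla\Phi(x(t))\rangle=\|\dot x(t)\|^2,$$
multiply by $t$, and integrate, coupling the bounds $\int_{t_0}^\infty t\|\dot x\|^2\,dt<\infty$ from Theorem~\ref{T:alpha3}(ii) and $\int_{t_0}^\infty t(\Phi(x)-\min\Phi)\,dt<\infty$ from Theorem~\ref{T:SBC} to pin down $\bar h_0$.

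The hard part will be converting this integrated inequality into the sharp bound $2\bar h_0\le\|x_\infty\|^2$. My plan is to construct a Lyapunov function in the spirit of the anchored energies $\Elambdaxi$ of Section~\ref{DefEnerg}---possibly a carefully weighted combination of energies anchored at $0$ and at $x_\infty$---whose monotone decay forces $\bar h_0$ to coincide with $\tfrac{1}{2}\|x_\infty\|^2$. As a fallback, a contradiction argument seems plausible: if $\lim\|x(t)\|$ stayed strictly above $\|x_\infty\|$, the evenness inequality above would trap a persistent amount of $|\langle\nabla\Phi(x(t)),x_\infty\rangle|$ integrated against the weight $t$, producing an obstruction incompatible with the decay $\|\dot x(t)\|=\mathcal O(1/t)$ given by Theorem~\ref{T:alpha3}(i) and with the weak convergence $\langle x(t),x_\infty\rangle\to\|x_\infty\|^2$ already established.
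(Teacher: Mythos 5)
There is a genuine gap. Your reduction is fine as far as it goes: $0\in\argmin\Phi$, the limit $\bar h_0=\lim_{t\to\infty}\tfrac12\|x(t)\|^2$ exists (this comes from the Opial argument in the proof of Theorem \ref{T:weak_convergence}, not from part vi) of Theorem \ref{Thm-weak-conv2}), weak lower semicontinuity gives $\|x_\infty\|^2\le 2\bar h_0$, and strong convergence is indeed equivalent to the reverse inequality $2\bar h_0\le\|x_\infty\|^2$. But that reverse inequality is exactly the hard content of the theorem, and your proposal does not prove it: it is deferred to an unspecified Lyapunov function ``in the spirit of $\Elambdaxi$'' or to a contradiction argument. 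Moreover, the tools you propose to feed into it cannot suffice in principle. The quantities you control --- $\int t\|\dot x\|^2\,dt<\infty$, $\int t(\Phi(x)-\min\Phi)\,dt<\infty$, and (after integrating your refined inequality in $h_0$) $\int t\,|\langle\nabla\Phi(x(t)),x_\infty\rangle|\,dt<\infty$ --- are all ``gradient/value smallness'' statements near $\argmin\Phi$, and such statements are compatible with weak-but-not-strong convergence (this is precisely the lesson of Baillon's counterexample for the steepest descent flow, where values and gradients decay and the weak limit exists, yet $\|x(t)\|$ does not converge to the norm of the weak limit). In particular the fallback fails at the outset: there is no persistence of $|\langle\nabla\Phi(x(t)),x_\infty\rangle|$ to contradict; this term tends to be small near the minimum regardless of whether $\lim\|x(t)\|$ exceeds $\|x_\infty\|$, so nothing forces $2\bar h_0\le\|x_\infty\|^2$.

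The missing idea is that evenness must be exploited with a \emph{moving} anchor taken on the trajectory itself, not with the fixed anchors $\pm x_\infty$. The paper's proof (an adaptation of Bruck's argument) fixes $t_0\le\tau\le s$, sets $q(\tau)=\|x(\tau)\|^2-\|x(s)\|^2-\tfrac12\|x(\tau)-x(s)\|^2$, and combines the decay of the energy $W$ of Lemma \ref{L:W} with convexity at $x(\tau)$ against the reflected future point $-x(s)$, using $\Phi(x(s))=\Phi(-x(s))$; this yields $\tau\ddot q(\tau)+\alpha\dot q(\tau)\le\tfrac32\,\tau\|\dot x(\tau)\|^2$, hence, as in Lemma \ref{basic-edo}, $\dot q(\tau)\le k(\tau)$ with $k\in L^1$ independent of $s$ by Theorem \ref{T:alpha3}(ii). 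Integrating from $t$ to $s$ gives $\tfrac12\|x(t)-x(s)\|^2\le\|x(t)\|^2-\|x(s)\|^2+\int_t^s k(\tau)\,d\tau$, and since $\lim_{t\to\infty}\|x(t)\|^2$ exists (because $0\in\argmin\Phi$), the right-hand side vanishes as $t,s\to\infty$: the trajectory is Cauchy, hence strongly convergent. If you want to salvage your scheme, this comparison of two trajectory points is the mechanism you would need to reproduce; the fixed-anchor inequality you derived, while correct, does not deliver it.
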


\begin{proof}
For $t_0 \leq \tau \leq s$, set
$$q(\tau)= \| x(\tau) \|^2 -\| x(s) \|^2 - \frac{1}{2} \|x(\tau)- x(s) \|^2 .$$
We have
$$\dot{q}(\tau)= \langle \dot{x}(\tau),x(\tau) + x(s) \rangle \qquad\hbox{and}\qquad \ddot{q}(\tau) = \|\dot{x}(\tau)\|^2 +  \langle \ddot{x}(\tau),x(\tau) + x(s) \rangle.$$
Combining these two equalities and using \eqref{edo01}, we obtain
\begin{equation} \label{E:q_ddot}
\ddot{q}(\tau) + \frac{\alpha}{\tau} \dot{q}(\tau) =   \|\dot{x}(\tau)\|^2 +  \langle \ddot{x}(\tau) + \frac{\alpha}{\tau} \dot{x}(\tau)    ,x(\tau) + x(s) \rangle =    \|\dot{x}(\tau)\|^2 -  \langle    \nabla \Phi (x(\tau))    ,x(\tau) + x(s) \rangle .
\end{equation}
Recall that the energy 
$W(\tau)= \frac{1}{2} \| \dot{x}(\tau) \|^2  + \Phi (x(\tau))$
is nonincreasing. Therefore,
\begin{eqnarray*}
\frac{1}{2} \| \dot{x}(\tau) \|^2  + \Phi (x(\tau)) & \geq  &  \frac{1}{2} \| \dot{x}(s) \|^2  + \Phi (x(s)) \\
& = &  \frac{1}{2} \| \dot{x}(s) \|^2  + \Phi (-x(s))\\
& \ge & \frac{1}{2} \| \dot{x}(s) \|^2  + \Phi (x(\tau)) - \langle  \nabla \Phi (x(\tau)) ,x(\tau) + x(s) \rangle,
\end{eqnarray*}
by convexity. After simplification, we obtain
\begin{equation}\label{even6}
\frac{1}{2} \| \dot{x}(\tau) \|^2   \geq  -  \langle    \nabla \Phi (x(\tau)),x(\tau) + x(s) \rangle  .
\end{equation}
Combining \eqref{E:q_ddot} and \eqref{even6}, we obtain
$$\tau\ddot{q}(\tau) + \alpha\dot{q}(\tau) \leq   \frac{3}{2}\tau \|\dot{x}(\tau)\|^2.$$
As in the proof of Lemma \ref{basic-edo}, we have
$$
\dot{q}(\tau) \leq k(\tau):= \frac{C}{\tau^{\alpha} }  +  \frac{3}{2\tau^{\alpha} }  \int_{t_0}^\tau  u^{\alpha} \|\dot{x}(u)\|^2du,
$$
where $C=2\|\dot x(t_0)\|\,\|x\|_{\infty}$. The function $k$ does not depend on $s$. Moreover, using Fubini's Theorem, we deduce that
$$
\int_{t_0}^{+\infty}k(\tau)\,d\tau\le \frac{C}{t_0^{\alpha-1}(\alpha-1)}+\frac{3}{2(\alpha-1)}\int_{t_0}^{+\infty}u\|\dot x(u)\|^2\,du<+\infty,
$$
by part ii) of Theorem \ref{T:alpha3}. Integrating $\dot{q}(\tau) \leq k(\tau)$ from $t$ to $s$, we obtain
$$
 \frac{1}{2} \|x(t)- x(s) \|^2 \leq  \| x(t) \|^2 -\| x(s) \|^2 
+ \int_t^s k(\tau) d\tau.
$$
Since $\Phi$ is even, we have $0 \in\argmin \Phi$. Hence 
$\lim_{t\to +\infty }\| x(t) \|^2$ exists (see the proof of Theorem \ref{T:weak_convergence}). As a consequence, $x(t)$ has the Cauchy property as $t \to + \infty$, and hence converges.
\end{proof}

\subsection{Uniformly convex functions}

Following \cite{BC}, a function $\Phi : \mathcal H \rightarrow \mathbb R$ is {\it uniformly convex on bounded sets} if, for each $r>0$, there is an increasing function $\omega_r:[0,+\infty[\to[0,+\infty[$ vanishing only at $0$, and such that
\begin{equation} \label{E:uniform}
\Phi(y) \geq \Phi (x) + \langle  \nabla \Phi (x)  , y-x \rangle + \omega_r(\|x-y\|)
\end{equation}
for all $x, y \in \mathcal H$ such that $\| x \|\leq r$ and $\| y \|\leq r$. Uniformly convex functions are strictly convex and coercive.

\begin{Theorem} \label{T:unif_convex}
Let $\Phi$ be uniformly convex on bounded sets, and let $x:[t_0,+\infty[\to\mathcal H$ be a solution of \eqref{edo01} with $\alpha>3$. Then $x(t)$ converges strongly, as $t\to+\infty$, to the unique $x^*\in\argmin\Phi$.
\end{Theorem}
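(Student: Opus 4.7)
The plan is to combine the uniform convexity inequality anchored at the minimizer with the $\mathcal O(1/t^2)$ decay of values from Theorem~\ref{T:SBC}. Since $\Phi$ is uniformly convex on bounded sets it is strictly convex and coercive, so $\argmin\Phi$ is a singleton $\{x^*\}$; in particular $\nabla\Phi(x^*)=0$ and $\Phi(x^*)=\min\Phi$. By part ii) of Theorem~\ref{T:alpha3}, the trajectory $x$ is bounded, so we may choose $r>0$ large enough that $\|x(t)\|\le r$ for all $t\ge t_0$ and $\|x^*\|\le r$.

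With this $r$, I apply the uniform convexity inequality \eqref{E:uniform} with the point $x$ replaced by $x^*$ and $y$ replaced by $x(t)$:
\begin{equation*}
\Phi(x(t))\ \ge\ \Phi(x^*)+\langle\nabla\Phi(x^*),x(t)-x^*\rangle+\omega_r(\|x(t)-x^*\|).
\end{equation*}
Since $\nabla\Phi(x^*)=0$ and $\Phi(x^*)=\min\Phi$, this simplifies to
\begin{equation*}
\omega_r(\|x(t)-x^*\|)\ \le\ \Phi(x(t))-\min\Phi.
\end{equation*}

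Theorem~\ref{T:SBC} (applied with $\alpha>3\ge 3$) gives $\Phi(x(t))-\min\Phi\le \mathcal E_{\alpha-1,0}(t_0)/t^2\to 0$, so $\omega_r(\|x(t)-x^*\|)\to 0$ as $t\to+\infty$. Because $\omega_r$ is nondecreasing and vanishes only at~$0$, this forces $\|x(t)-x^*\|\to 0$: indeed, if along some sequence $t_n\to+\infty$ one had $\|x(t_n)-x^*\|\ge\varepsilon>0$, then by monotonicity $\omega_r(\|x(t_n)-x^*\|)\ge\omega_r(\varepsilon)>0$, contradicting $\omega_r(\|x(t)-x^*\|)\to 0$. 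This establishes strong convergence to $x^*$.

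There is no real obstacle here; the only point worth double-checking is that the single radius $r$ can accommodate both the whole trajectory and the minimizer, which is guaranteed by boundedness of $x$ from Theorem~\ref{T:alpha3}~ii) together with the fact that $x^*$ exists (by coercivity of uniformly convex functions). Note that this argument does not actually use the weak-convergence result of Theorem~\ref{T:weak_convergence}; uniform convexity upgrades the value-rate directly to a rate on $\|x(t)-x^*\|$ via the modulus $\omega_r^{-1}$, and in favorable cases (e.g.\ $\omega_r(s)=cs^p$) this even yields an explicit rate.
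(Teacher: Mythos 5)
Your proof is correct and follows essentially the same route as the paper: boundedness of the trajectory from Theorem \ref{T:alpha3} ii), the uniform-convexity inequality anchored at the minimizer giving $\omega_r(\|x(t)-x^*\|)\le\Phi(x(t))-\min\Phi$, and convergence of the values to $\min\Phi$. The only (harmless) differences are that you obtain $x^*$ and fit it into the ball by coercivity and enlarging $r$, whereas the paper places $x^*$ in the ball via Theorem \ref{T:weak_convergence} and weak lower semicontinuity of the norm, and you invoke the $\mathcal O(1/t^2)$ rate of Theorem \ref{T:SBC} where the mere minimizing property of Theorem \ref{Thm-weak-conv2} suffices.
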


\begin{proof}
Recall that the trajectory $x(\cdot)$ is bounded, by part ii) in Theorem \ref{T:alpha3}. Let $r >0$ be such that $x$ is contained in the ball of radius $r$ centered at the origin. This ball also contains $x^*$, which is the weak limit of the trajectory in view of the weak lower-semicontinuity of the norm and Theorem \ref{T:weak_convergence}. Writing $y=x(t)$ and $x=x^*$ in \eqref{E:uniform}, we obtain
$$\omega_r (\|x(t)-x^{*} \|)\le \Phi(x(t))-\min\Phi. $$
The right-hand side tends to $0$ as $t\to+\infty$ by virtue of Theorem \ref{Thm-weak-conv2}. It follows that $x(t)$ converges strongly to $x^*$ as $t\to+\infty$.
\end{proof}

Let us recall that a function $\Phi : \mathcal H \rightarrow \mathbb R$ is {\it strongly convex} if there exists a positive constant $\mu$  such that
$$  
\Phi(y) \geq \Phi (x) + \langle  \nabla \Phi (x)  , y-x \rangle + \frac{\mu}{2} \|x-y  \|^2
$$
for all $x, y \in \mathcal H$. Clearly, strongly convex functions are uniformly convex on bounded sets. However, a striking fact is that convergence rates increase indefinitely with larger values of $\alpha$ for these functions.

\begin{Theorem}
Let $\Phi:\mathcal H\rightarrow\R$ be strongly convex, and let $x:[t_0,+\infty[\to\mathcal H$ be a solution of \eqref{edo01} with $\alpha>3$. Then $x(t)$ converges strongly, as $t\to+\infty$, to the unique element $x^*\in\argmin\Phi$. Moreover 
\begin{equation}\label{elliptique}
\Phi(x(t))-\min\Phi=\mathcal O\left(t^{-\dta}\right),\qquad 
\|x(t)-x^*\|^2=\mathcal O\left(t^{-\dta}\right),\qquad\hbox{and}\qquad
\|\dot x(t)\|^2=\mathcal O\left(t^{-\dta}\right).
\end{equation}
\end{Theorem}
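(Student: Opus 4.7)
Strong convergence to the unique minimizer $x^*$ will be immediate from Theorem~\ref{T:unif_convex}: strong convexity with parameter $\mu$ is precisely \eqref{E:uniform} with the global choice $\omega_r(s)=\tfrac{\mu}{2}s^2$ for every $r$, so $\Phi$ is uniformly convex on bounded sets. The content of the theorem is therefore the quantitative rate \eqref{elliptique}.

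For the rate, the plan is to construct a Lyapunov function $\mathcal{G}(t):=t^p\,\Elambdaxi(t)$, now allowing $\xi$ to be \emph{negative}, and to tune the three parameters so that $\dot{\mathcal{G}}$ collapses to a single sign-controlled residual. Starting from \eqref{dElambdaxi} together with the strong-convexity lower bound $\langle x-x^*,\nabla\Phi(x)\rangle\ge \Phi(x)-\min\Phi+\tfrac{\mu}{2}\|x-x^*\|^2$, the derivative of $t^p\Elambdaxi$ will produce, besides the good term $-\tfrac{\lambda\mu}{2}t^{p+1}\|x-x^*\|^2$, coefficients $(p+2-\lambda)$, $\bigl(\xi-\lambda(\alpha-\lambda-1-p)\bigr)$ and $-\bigl(\alpha-\lambda-1-p/2\bigr)$ in front of $t^{p+1}(\Phi-\min\Phi)$, $t^p\langle x-x^*,\dot x\rangle$ and $t^{p+1}\|\dot x\|^2$ respectively. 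Imposing that these three vanish is a $3\times 3$ linear system whose unique solution is
$$p=\tfrac{2}{3}(\alpha-3)=\dtat,\qquad \lambda=p+2=\tfrac{2}{3}\alpha=\dta,\qquad \xi=-\tfrac{2\alpha(\alpha-3)}{9}<0.$$
A short computation then gives
$$\dot{\mathcal{G}}(t)\le \tfrac{1}{2}\,t^{p-1}\|x(t)-x^*\|^2\left[\tfrac{2p\alpha(\alpha+3)}{9}-\lambda\mu\, t^2\right],$$
which is $\le 0$ for all $t$ beyond some threshold $t_1$. Hence $\mathcal{G}$ is nonincreasing on $[t_1,+\infty[$ and, by continuity on $[t_0,t_1]$, bounded on $[t_0,+\infty[$ by some constant $M$.

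The subtle point is that with $\xi<0$, $\mathcal{G}$ is not manifestly nonnegative, so $\mathcal{G}(t)\le M$ does not directly give \eqref{elliptique}. Here strong convexity will reenter: the inequality $t^{p+2}(\Phi(x(t))-\min\Phi)\ge \tfrac{\mu}{2}t^{p+2}\|x(t)-x^*\|^2$ lets us absorb the negative $\xi$ term, turning $\mathcal{G}(t)\le M$ into
$$\tfrac{t^p}{2}\|x(t)-x^*\|^2\bigl[\mu t^2-|\xi|\bigr]\le M,$$
whence $\|x(t)-x^*\|^2=\mathcal{O}(t^{-(p+2)})=\mathcal{O}(t^{-\dta})$ for $t$ large. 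Substituting this back into $\mathcal{G}(t)\le M$, the negative $\xi$ contribution becomes $\mathcal{O}(1)$, so both $t^{p+2}(\Phi(x(t))-\min\Phi)$ and $\tfrac{t^p}{2}\|\lambda(x(t)-x^*)+t\dot x(t)\|^2$ stay $\mathcal{O}(1)$. The first gives $\Phi(x(t))-\min\Phi=\mathcal{O}(t^{-\dta})$; the second, combined with the triangle inequality $t\|\dot x\|\le \lambda\|x-x^*\|+\|\lambda(x-x^*)+t\dot x\|$ and the previous bound, yields $\|\dot x(t)\|^2=\mathcal{O}(t^{-\dta})$.

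The main obstacle is algebraic and structural: the fortunate compatibility of three cancellation conditions on three parameters, and the fact that the solution requires $\xi<0$, forces one to work with a Lyapunov function that is not a priori nonnegative. The whole argument then hinges on strong convexity being strong enough both to make $\mathcal{G}$ eventually bounded below and to extract the sharp rate $\mathcal{O}(t^{-\dta})$ for $\|x-x^*\|^2$ from the single bound $\mathcal{G}(t)\le M$.
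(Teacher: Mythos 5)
Your proof is correct and follows essentially the paper's own strategy: the same rescaled anchored energy with $p=\frac{2}{3}(\alpha-3)$ and $\lambda=\frac{2}{3}\alpha$, strong convexity used both in the derivative estimate (to produce the $-\frac{\lambda\mu}{2}t^{p+1}\|x-x^*\|^2$ term) and in the final absorption, a threshold time $t_1$, and the same extraction of the three rates from boundedness of the energy. The only difference is bookkeeping of the cross term: the paper keeps $\xi=0$ (working with $t^p\mathcal E_{\lambda,0}$), so a residual term $\frac{\lambda p}{2}t^p\langle x-x^*,\dot x\rangle$ survives and is removed by an integration by parts after integrating the differential inequality, the resulting boundary term $\frac{\lambda p}{4}t^p\|x(t)-x^*\|^2$ being absorbed via strong convexity and $t\ge t_1$; you instead cancel that cross term pointwise by taking $\xi=-\frac{\lambda p}{2}=-\frac{2\alpha(\alpha-3)}{9}<0$ inside the Lyapunov function and then restore the missing nonnegativity through $\Phi(x)-\min\Phi\ge\frac{\mu}{2}\|x-x^*\|^2$ — two equivalent ways of performing the same integration by parts, and your handling of the sign issue (first extracting $\|x(t)-x^*\|^2=\mathcal O(t^{-(p+2)})$, then feeding it back to bound the other two terms) is sound.
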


\begin{proof}
Strong convergence follows from Theorem \ref{T:unif_convex} because strongly convex functions are uniformly convex on bounded sets.
From \eqref{dElambdap} and the strong convexity of $\Phi$, we deduce that
\begin{eqnarray*}
\frac{d}{dt}\Elambdap(t) & \leq & 
  (p+2-\lambda)t^{p+1}(\Phi(x)-\min\Phi) 
  - \lambda(\alpha-\lambda-1-p)t^p\langle x-x^*,\dot x\rangle \\
\nonumber
 & & 
  -\frac{\lambda}{2}(\mu t^2-p\lambda)t^{p-1}\|x-x^*\|^2
  - \left(\alpha-\lambda-1-\frac{p}{2}\right)t^{p+1}\|\dot x\|^2
\end{eqnarray*}
for any $\lambda\geq 0$ and any $p\geq 0$. Now fix $p=\dtat$ and $\lambda=\dta$, so that $p+2-\lambda=\alpha-\lambda-1-p/2=0$ and $\alpha-\lambda-1-p=-p/2$. The above inequality becomes 
$$\frac{d}{dt}\Elambdap(t) \leq \frac{\lambda p}{2}t^p\langle x-x^*,\dot x\rangle -\frac{\lambda}{2}(\mu t^2-p\lambda)t^{p-1}\|x-x^*\|^2.$$
Define $t_1=\max\left\{t_0,\sqrt{\frac{p\lambda}{\mu}}\right\}$, so that
$$\frac{d}{dt}\Elambdap(t) \leq \frac{\lambda p}{2}t^p\langle x-x^*,\dot x\rangle$$
for all $t\geq t_1$. Integrate this inequality from $t_1$ to $t$ (use integration by parts on the right-hand side) to get 
$$
\Elambdap(t) \leq \Elambdap(t_1) + \frac{\lambda p}{4}\left( t^p\|x(t)-x^*\|^2 - t_1^p\|x(t_1)-x^*\|^2 - p\int_{t_1}^t s^{p-1}\|x(s)-x^*\|^2 ds \right).
$$
Hence,
\begin{equation}\label{Ett}
\Elambdap(t) \leq \Elambdap(t_1) + \frac{\lambda p}{4} t^p\|x(t)-x^*\|^2 \leq \Elambdap(t_1) + \frac{\lambda p}{2\mu} t^p(\Phi(x(t))-\min\Phi),
\end{equation}
in view of the strong convexity of $\Phi$. By the definition of $\Elambdap$, we have
$$
t^{p+2}((\Phi(x(t))-\min\Phi) \leq \Elambdap(t) \leq 
\Elambdap(t_1) + \frac{\lambda p}{2\mu} t^p(\Phi(x(t))-\min\Phi).
$$
Dividing by $t^{p+2}$ and using the definition of $t_1$, along with the fact that $t\geq t_1$, we obtain
\begin{eqnarray*}
\Phi(x(t))-\min\Phi & \leq &   \Elambdap(t_1)t^{-p-2} + \frac{\lambda p}{2\mu} t^{-2}(\Phi(x(t))-\min\Phi)\\
 & \leq & \Elambdap(t_1)t^{-p-2}+\frac{\lambda p}{2\mu}t_1^{-2}(\Phi(x(t))-\min\Phi)\\
 & \leq & \Elambdap(t_1)t^{-p-2}+\frac{1}{2}(\Phi(x(t))-\min\Phi).
\end{eqnarray*}
Recalling that $p=\dtat$ and $\lambda=\dta$, we deduce that
\begin{equation}\label{Phit}
\Phi(x(t))-\min\Phi \leq 2\Elambdap(t_1)t^{-p-2} = \left[2\mathcal E_{\frac{2}{3}\alpha}^{\frac{2}{3}(\alpha-3)}(t_1)\right]t^{-\frac{2}{3}\alpha}.
\end{equation}
The strong convexity of $\Phi$ then gives
\begin{equation}\label{xxt}
\|x(t)-x^*\|^2 \leq \frac{2}{\mu}(\Phi(x(t))-\min\Phi) \leq \left[\frac{4}{\mu}\Elambdap(t_1)\right]t^{-p-2} = 
  \left[\frac{4}{\mu} \mathcal E_{\frac{2}{3}\alpha}^{\frac{2}{3}(\alpha-3)}(t_1)\right]t^{-\frac{2}{3}\alpha}.
\end{equation}
Inequalities \eqref{Phit} and \eqref{xxt} settle the first two points in \eqref{elliptique}.

Now, using \eqref{Ett} and \eqref{Phit}, we derive
$$\Elambdap(t) \leq \Elambdap(t_1) + \frac{\lambda p}{2\mu} t^p(\Phi(x(t))-\min\Phi)\le \Elambdap(t_1)+\frac{\lambda p}{\mu}\Elambdap(t_1)t^{-2}\le \Elambdap(t_1)+\frac{\lambda p}{\mu}\Elambdap(t_1)t_1^{-2}\le 2\Elambdap(t_1).$$
The definition of $\Elambdap$ then gives
$$\frac{t^p}{2}\|\lambda(x(t)-x^*)+t\dot x(t)\|^2 \le \Elambdap(t) \le 2\Elambdap(\tau).$$
Hence
$$\|\lambda(x(t)-x^*)+t\dot x(t)\|^2 \leq 4 t^{-p} \Elambdap(t_1),$$
and
$$t\|\dot x(t)\| \leq  2 t^{-p/2} \sqrt{\Elambdap(t_1)} + \lambda\|x(t)-x^*\|.$$
But using \eqref{xxt}, we deduce that
$$\lambda\|x(t)-x^*\|\le \frac{2\lambda}{\sqrt{\mu}}t^{-p/2-1}\sqrt{\Elambdap(t_1)}.$$
The last two inequalities together give
$$t\|\dot x(t)\| \le   2 t^{-p/2} \sqrt{\Elambdap(t_1)} \left(1+\frac{\lambda t^{-1}}{\sqrt{\mu}}\right) \le 2 t^{-p/2}\sqrt{\Elambdap(t_1)}\left(1+\sqrt{\frac{\lambda}{p}}\right).$$
Taking squares, and rearranging the terms, we obtain
$$\|\dot x(t)\|^2 \leq \left[4 \left(1+\sqrt{\frac{\alpha}{\alpha-3}}\right)^2\mathcal E_{\frac{2}{3}\alpha}^{\frac{2}{3}(\alpha-3)}(t_1)
    \right] t^{-\frac{2}{3}\alpha},$$
which shows the last point in \eqref{elliptique} and completes the proof.
\end{proof}

The preceding theorem extends \cite[Theorem 4.2]{SBC}, which states that if $\alpha>9/2$, then $\Phi(x(t))-\min\Phi=\mathcal O(1/t^{3})$.

%
%
%

\section{Convergence of the associated algorithms} \label{S:algorithm}

In many situations, one is faced with a non-smooth convex minimization problems with an additive structure of the form
\begin{equation}\label{algo1}
\min \left\lbrace  \Phi (x) + \Psi (x): \ x \in \mathcal H \right\rbrace,
\end{equation}
where $\Phi: \mathcal H \to  \mathbb R \cup \lbrace   + \infty  \rbrace  $ is proper, lower-semicontinuous and convex, and $\Psi: \mathcal H \to  \mathbb R$ is convex and continuously differentiable.

Following the analysis carried out in the previous sections, it seems reasonable to consider the differential inclusion
\begin{equation}\label{algo2}
\ddot{x}(t) + \frac{\alpha}{t} \dot{x}(t)  + \partial \Phi (x(t)) + \nabla \Psi (x(t)) \ni 0,
\end{equation}
in order to approximate optimal solutions for \eqref{algo1}. This differential inclusion is a special instance of
$$\ddot{x}(t) + a(t) \dot{x}(t)  + \partial \Theta (x(t)) \ni 0,$$
where $\Theta: \mathcal H \to  \mathbb R \cup \lbrace   + \infty  \rbrace  $  is a convex lower-semicontinuous proper function, and $a(\cdot)$ is a positive damping parameter. This differential inclusion has been studied in \cite{ACR} in the case of a fixed positive damping parameter $a(t)\equiv\gamma >0$. In that setting, and at least localy, each trajectory is Lipschitz continuous, its velocity has bounded variation, and its acceleration is a bounded vectorial measure.

Thus, setting $\Theta (x)= \Phi (x) + \Psi (x)$,  we can reasonably expect that the rapid convergence properties, studied in Section \ref{S:minimizing} for the solutions of \eqref{edo01}, should hold for the solutions of \eqref{algo2} as well. Especially, that $\Theta(x(t))-\min\Theta=\mathcal O(1/t^2)$, and that each trajectory converges to an optimal solution. A detailed analysis is an interesting topic for further research, but goes beyond the scope of this paper.
 
Using these ideas as a guideline, we shall introduce corresponding fast converging algorithms, making the link with Nesterov \cite{Nest1}-\cite{Nest4}, Beck-Teboulle \cite{BT}, and the recent works of Chambolle-Dossal \cite{CD}, and Su-Boyd-Candes \cite{SBC}. More precisely, it is possible to discretize \eqref{algo2} implicitely with respect to the nonsmooth function $\Phi$, and explicitly with respect to the smooth function $\Psi$. Indeed, taking a time step size $h>0$, and $t_k= kh$, $x_k = x(t_k)$ the classical finite difference scheme for (\ref{algo2}) gives
\begin{equation}\label{algo3}
\frac{1}{h^2}(x_{k+1} -2 x_{k}   + x_{k-1} ) +\frac{\alpha}{kh^2}( x_{k}  - x_{k-1})  + \partial \Phi (x_{k+1}  ) + \nabla \Psi (y_k) \ni 0,
\end{equation}
where  $y_k$ is a linear combination of $x_k$ and $x_{k-1}$, that will be made precise later. After developing \eqref{algo3}, we obtain 
\begin{equation}\label{algo4}
x_{k+1} + h^2 \partial \Phi (x_{k+1}) \ni  x_{k} + \left( 1- \frac{\alpha}{k}\right) ( x_{k}  - x_{k-1}) - h^2 \nabla \Psi (y_k).
\end{equation}
A natural choice for $y_k$ leading to a simple formulation of the algorithm is
\begin{equation}\label{algo5}
y_k=   x_{k} + \left( 1- \frac{\alpha}{k}\right) ( x_{k}  - x_{k-1}).
\end{equation}
Of course, other choices are possible, an interesting topic for further research. Using the classical {\it proximity operator}
$$\mbox{prox}_{ \gamma \Phi } (x)= {\argmin}_{\xi} \left\lbrace \Phi (\xi) + \frac{1}{2 \gamma} \| \xi -x  \|^2
\right\rbrace  = \left(I + \gamma \partial \Phi \right)^{-1} (x),$$
and setting $\gamma=h^2$, the algorithm can be written as
\begin{equation}\label{algo7}
 \left\{
\begin{array}{l}
y_k=   x_{k} + \left( 1- \frac{\alpha}{k}\right) ( x_{k}  - x_{k-1});  	   \\
\rule{0pt}{20pt}
 x_{k+1} = \mbox{prox}_{\gamma\Phi} \left( y_k- \gamma \nabla \Psi (y_k) \right).
 \end{array}\right.
\end{equation}
For practical purposes, and in order to fit with the existing literature on the subject, it is convenient to work with the following equivalent formulation
\begin{equation}\label{algo7b}
\left\{
\begin{array}{l}
y_k=   x_{k} + \frac{k -1}{k  + \alpha -1} ( x_{k}  - x_{k-1});  	   \\
\rule{0pt}{20pt}
 x_{k+1} = \mbox{prox}_{\gamma\Phi} \left( y_k- \gamma\nabla \Psi (y_k) \right). 
 \end{array}\right.
\end{equation}

This algorithm is within the scope of the proximal-based inertial algorithms \cite{AA1}, \cite{MO}, \cite{LP} and forward-backward methods. It has been recently introduced by Chambolle-Dossal \cite{CD}, and Su-Boyd-Cand\`es \cite{SBC}. For $\alpha = 3$, we recover the classical algorithm based on Nesterov and G\"uler ideas, and developed by Beck-Teboulle (FISTA)
\begin{equation}\label{algo7c}
 \ \left\{
\begin{array}{l}
y_k=   x_{k} + \frac{k -1}{k  + 2} ( x_{k}  - x_{k-1});  	   \\
\rule{0pt}{20pt}
 x_{k+1} = \mbox{prox}_{h^2 \Phi} \left( y_k- h^2 \nabla \Psi (y_k) \right). 
 \end{array}\right.
\end{equation}
The fast convergence properties of the algorithm \eqref{algo7b} were recently highlighted by Su-Boyd-Cand\`es \cite{SBC} and Chambolle-Dossal \cite{CD}. An important $-$ and still open $-$ question regarding the FISTA method, as described in \eqref{algo7c}, is the convergence of sequences $(x_k)$ and $(y_k)$. The main interest of considering the broader context given in \eqref{algo7b} is that, for $\alpha >3$, these sequences  converge. This has recently been obtained by Chambolle-Dossal \cite{CD}.
Following \cite{SBC}, we will see that the proof of the convergence properties of \eqref{algo7b} can be obtained in a parallel way with the convergence analysis in the continuous case.

\bigskip

More precisely, following the arguments in the preceding sections, one is able to prove the following:

\begin{Theorem}
If $\alpha>0$, then $\lim\Phi(x_k)=\inf\Phi$ and every weak limit point of $(x_k)$, as $k\to+\infty$, belongs to $\argmin\Phi$. For $\argmin\Phi\neq\emptyset$, we have the following:
\begin{itemize}
	\item [i)] If $\alpha\ge 3$, then $\Phi(x_k)-\min\Phi=\mathcal O(1/k^2)$ and $\|x_{k+1}-x_k\|=\mathcal O(1/k)$.
	\item [ii)] If $\alpha>3$, then $\sum k(\Phi(x_k)-\min\Phi)<+\infty$, $\sum k\|x_{k+1}-x_k\|^2<+\infty$, and $x_k$ converges weakly, as $k\to+\infty$, to some $x^*\in\argmin\Phi$. Strong convergence holds if $\Phi$ is even, uniformly convex, or if $\argmin\Phi\neq\emptyset$.
\end{itemize}
\end{Theorem}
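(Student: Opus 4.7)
The plan is to mimic, at the discrete level, the Lyapunov analysis carried out in Sections \ref{S:minimizing} and \ref{S:strong}. Let $F=\Phi+\Psi$, assume that $\nabla\Psi$ is $L$-Lipschitz and take the step $\gamma\in (0,1/L]$. The starting point is the familiar ``descent inequality'' for one forward-backward step: by the definition of the proximal operator and the $L$-Lipschitz continuity of $\nabla\Psi$, one has, for every $z\in\mathcal H$,
\begin{equation*}
F(x_{k+1}) + \frac{1}{2\gamma}\|x_{k+1}-z\|^2 \le F(z) + \frac{1}{2\gamma}\|y_k-z\|^2 - \frac{1}{2\gamma}\|y_k-x_{k+1}\|^2.
\end{equation*}
This is the discrete counterpart of the convexity inequality used to dispose of $\langle x-z,\nabla\Phi(x)\rangle$ in the continuous setting.

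The main step is to introduce discrete analogues of the anchored energies $\mathcal E_{\lambda,\xi}$ and $\mathcal E^p_\lambda$ of Section \ref{DefEnerg}. Following the parallelism with \eqref{edo01} where $t_k=k\sqrt\gamma$ plays the role of continuous time, I set, for $x^*\in\argmin F$,
\begin{equation*}
\mathcal E_k = \gamma(k-1)(k+\alpha-2)\bigl(F(x_k)-\min F\bigr) + \tfrac{1}{2}\bigl\|(k-1)(x_k-x_{k-1})+(\alpha-1)(x_k-x^*)\bigr\|^2,
\end{equation*}
which is the discrete form of $\mathcal E_{\alpha-1,0}$, with $\lambda=\alpha-1$. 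Applying the descent inequality twice, once at $z=x^*$ and once at $z=x_k$, with the weights $2(k-1)/(k+\alpha-1)$ and $2(\alpha-2)/(k+\alpha-1)$ chosen so that the mixed term involving $y_k$ telescopes correctly through \eqref{algo7b}, I expect to obtain
\begin{equation*}
\mathcal E_{k+1}-\mathcal E_k \le -\gamma(\alpha-3)(k-1)\bigl(F(x_{k+1})-\min F\bigr) - \text{(nonnegative remainder)},
\end{equation*}
the exact discrete mirror of \eqref{DE1}. This immediately gives $F(x_k)-\min F = \mathcal O(1/k^2)$ when $\alpha\ge 3$, and, summing from $k=k_0$ to $+\infty$ when $\alpha>3$, the summability $\sum k(F(x_k)-\min F)<+\infty$. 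A parallel construction with $\lambda=2$, $\xi=2(\alpha-3)$ yields the discrete version of \eqref{energy1}, namely $\sum k\|x_{k+1}-x_k\|^2<+\infty$ together with the boundedness of $(x_k)$ (and hence $\|x_{k+1}-x_k\|=\mathcal O(1/k)$ when $\alpha\ge 3$). The part of the theorem dealing with the minimizing property for arbitrary $\alpha>0$ is obtained by the same argument as Theorem \ref{Thm-weak-conv2} applied to the sequence $k\mapsto F(x_k)$, the discrete energy being nonincreasing up to summable perturbations.

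For weak convergence when $\alpha>3$, I would invoke the discrete Opial lemma. The second hypothesis of Opial (weak limit points lie in $\argmin F$) follows from the minimizing property and the weak lower-semicontinuity of $F$. The first hypothesis, i.e.\ the existence of $\lim_k\|x_k-x^*\|$ for every $x^*\in\argmin F$, is the discrete analogue of the fact that $h_{x^*}(t)$ has a limit; it is obtained by expanding $\|x_{k+1}-x^*\|^2$ with the descent inequality at $z=x^*$ and applying a summable-perturbation lemma of Alvarez-Attouch type, using that $\sum k\|x_{k+1}-x_k\|^2<+\infty$. Strong convergence results are then deduced verbatim from their continuous counterparts (Theorems \ref{Thm-strong-int}, \ref{Thm-strong-conv} and \ref{T:unif_convex}), since their proofs use only the weak limit, the minimizing property, and elementary properties of the sequence.

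The main obstacle I foresee is the bookkeeping in the derivation of the discrete Lyapunov inequality: the discretization produces ``second-order'' terms $\|x_{k+1}-2x_k+x_{k-1}\|^2$ and cross terms that do not cancel as cleanly as in the continuous case, and the choice of weighting $\gamma(k-1)(k+\alpha-2)$ (rather than $\gamma k^2$) is dictated precisely by the need to absorb these remainders into a nonnegative term. Once this algebraic identification is carried out, the remainder of the argument is a transcription of the continuous analysis.
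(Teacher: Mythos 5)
Your plan is essentially the paper's own argument: the authors also prove this theorem by transposing the continuous Lyapunov analysis, using the discrete counterpart of $\mathcal E_{\alpha-1,0}$ (as in \cite[Theorem 4.3]{SBC}) to obtain the $\mathcal O(1/k^2)$ rate and $\sum_k k\big((\Phi+\Psi)(x_k)-\min(\Phi+\Psi)\big)<+\infty$, the estimate $\sum_k k\|x_k-x_{k-1}\|^2<+\infty$ (as in \cite[Corollary 2]{CD}), and then a discrete Opial argument with a summable-perturbation lemma, with strong convergence handled by transcribing Theorems \ref{Thm-strong-int}, \ref{Thm-strong-conv} and \ref{T:unif_convex}. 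Two details in your sketch need repair, though neither changes the route. First, your descent inequality is stated with too strong a constant: for $\gamma\in(0,1/L]$ the correct form is $F(x_{k+1})+\frac{1}{2\gamma}\|x_{k+1}-z\|^2+\big(\frac{1}{2\gamma}-\frac{L}{2}\big)\|x_{k+1}-y_k\|^2\le F(z)+\frac{1}{2\gamma}\|y_k-z\|^2$; with the coefficient $\frac{1}{2\gamma}$ on the last term that you wrote, the inequality is false already for $\Phi=0$, $\Psi$ quadratic and $z=y_k$ (the weaker, correct version is all the Lyapunov computation needs). Second, for $\alpha=3$ the energy with $\lambda=2$, $\xi=2(\alpha-3)$ degenerates ($\xi=0$), so it cannot deliver boundedness of $(x_k)$; to get $\|x_{k+1}-x_k\|=\mathcal O(1/k)$ for all $\alpha\ge3$, extract boundedness instead from the nonincreasing $\lambda=\alpha-1$ energy: its second term bounds $v_k:=(k-1)(x_k-x_{k-1})+(\alpha-1)(x_k-x^*)$, and the identity $(k+\alpha-2)(x_k-x^*)=(k-1)(x_{k-1}-x^*)+v_k$ then yields $\sup_k\|x_k-x^*\|<+\infty$ by induction, hence the $\mathcal O(1/k)$ bound on the increments.
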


\if{

\begin{proof} 

The convergence analysis in the continuous case provides us a guide for the study of the discrete case.
Following this dynamical approach, let us sketch the main lines of the proof, see \cite{SBC} and  \cite{CD} for a detailed proof.

\smallskip

To simplify notations let us set $\Theta = \Phi + \Psi$, ${\Theta}^* = \inf (\Phi + \Psi)$, $s=h^2$. 

\noindent \emph{Step one:} Let us introduce the following energy function, the correspondent of the function $ \mathcal E_{\alpha-1,0} $ defined in section 
\ref{DefEnerg}, which will serve  as  base for the Lyapunov analysis:
\begin{equation}\label{algo12}
   \mathcal E (k)= 2s \frac{(k+\alpha -2)^2}{\alpha -1} (\Theta (x_k) - {\Theta}^*) + \frac{1}{\alpha -1} \| (k+\alpha -1)y_k -ky_k - (\alpha -1) x^*  \|^2 .
\end{equation}
Then, it is proved in \cite[Theorem 4.3]{SBC} that $ \mathcal E (k)$ is a strict Lyapunov function, and, for any  $k \in \mathbb N$
\begin{equation}\label{algo13}
   \mathcal E (k)+  \frac{2s}{\alpha -1} \Big ( (\alpha -3) (k+ \alpha -2)  +1 \Big ) (\Theta (x_k) -  {\Theta}^* ) \leq \mathcal E (k-1) .
\end{equation}
From this, and the nonnegativity of the different constituents of $  \mathcal E (k)$, we immediately infer the fast convergence property 
(\ref{algo8}), and the estimation (\ref{algo9})
\begin{equation*}
  \sum_k    k \left((\Phi + \Psi)(x_k) - \inf(\Phi + \Psi) \right)   < + \infty.
\end{equation*}
This is the equivalent of the estimate (\ref{energy1}) in the continuous case (with $\Theta$ instead of $\Phi$)
\begin{equation*}
  \int_{t_0}^{\infty}  t \left( \Theta(x(t)) - \inf\Theta \right) dt  < + \infty.
\end{equation*}

\smallskip

\noindent \emph{Step two:} The next step consists in passing from (\ref{algo9}) to the energy estimate (\ref{algo10})
\begin{equation*}
  \sum_k    k  \| x_k - x_{k-1} \|^2  dt   < + \infty ,
\end{equation*}
which is the discrete version of the continuous energy estimate (\ref{energy1})
\begin{equation*}
  \int_{t_0}^{\infty}  t\| \dot{x}(t) \|^2  dt   < + \infty .
\end{equation*}
Estimation (\ref{algo10}) has been obtained in \cite[Corollary 2]{CD}.

\smallskip

\noindent \emph{Step three:} The final step is to apply Opial's Lemma.  Using the previous estimates, it is a direct adaptation of the classical proof of the convergence of proximal-like  inertial algorithms. It is a parallel argument to that using the differential inequality
(\ref{basic-edo1}) with $\|x_k- x^{*} \|^2$ instead of $\|x(t)- x^{*} \|^2$, and $x^{*} \in \argmin (\Phi + \Psi)$.

\end{proof}

}\fi

\section{Further remarks} \label{S:conclusion}

\subsection{Nonsmooth objective function}

As mentioned at the beginning of Section \ref{S:algorithm}, it is interesting to establish the asymptotic properties, as $t\to+\infty$, of the solutions of the differential inclusion \eqref{algo2}. Beyond global existence issues, one must check that the Lyapunov analysis is still valid. In view of the validity of the subdifferential inequality for convex functions, the (generalized) chain rule for derivatives over curves (see \cite{Bre1}), our conjecture is that most results presented here can be transposed to this more general context, except for the stabilization of the acceleration, which relies on the Lipschitz character of the gradient.

\subsection{Time reparameterization} Let us examine briefly the effect of some simple rescaling procedures:

\subsubsection*{Invariance}
The condition $\alpha >3$ is not affected by an affine time rescaling. Indeed, if $a>0$ and we take $t=as$ in \eqref{edo01}, we obtain
$$\ddot{y}(s) + \frac{\alpha}{s} \dot{y}(s) + a^2\nabla \Phi (y(s)) = 0,$$
where $y(s)= x(as)$. This produces an analogue system with $\Phi$ replaced by $\Phi_a:=a^2\Phi$.

\subsubsection*{Variable damping versus variable mass}
If we take $t=\sqrt{s}$ in \eqref{edo01} we obtain
$$ 2s\ddot{z} (s) + (2\alpha + 1) \dot{z}(s) + \nabla \Phi (z(s)) = 0,$$
where $z(s)= x(\sqrt{s})$. In this alternative formulation, the viscous damping parameter is fixed, but the mass coefficient becomes infinitely large as $t\to + \infty$. This suggests that a parallel analysis can be performed by controlling the mass coefficient, instead of the viscosity coefficient.

\subsection{Selection of the initial conditions}
The constant in the order of convergence given by Theorem \ref{T:SBC} is 
$$K(x_0,v_0)=\mathcal E_{\alpha-1,0}(t_0)=t_0^2(\Phi(x_0)-\min\Phi)+\frac{1}{2}\|(\alpha-1)(x_0-x^*)+t_0v_0\|^2,$$
where $x_0=x(t_0)$ and $v_0=\dot x(t_0)$. This quantity is minimized when $x_0^*\in\argmin\Phi$ and $v_0^*=\frac{(\alpha-1)}{t_0}(x^*-x_0^*)$, with $\min K=0$. If $x_0^*\neq x^*$, the trajectory will not be stationary, but the value $\Phi(x(t))$ will be constantly equal to $0$. Of course, selecting $x_0^*\in\argmin\Phi$ is not realistic, and the point $x^*$ is unknown. Keeping $\hat x_0$ fixed, the function $v_0\mapsto K(\hat x_0,v_0)$ is minimized at $\hat v_0=\frac{(\alpha-1)}{t_0}(x^*-\hat x_0)$. This suggests taking the initial velocity as a multiple of an approximation of $x^*-\hat x_0$, such as the gradient direction $\hat v_0=\nabla\Phi(\hat x_0)$, Newton or Levenberg-Marquardt direction $\hat v_0=[\varepsilon I+\nabla^2\Phi(\hat x_0)^{-1}]\nabla\Phi(\hat x_0)$ ($\varepsilon\ge 0$), or the proximal point direction $\hat v_0=\left[(I+\gamma\nabla\Phi)^{-1}(\hat x_0)-\hat x_0\right]$ ($\gamma>>0$).

\subsection{Continuous versus discrete}

The analysis carried out in Section \ref{S:algorithm} for inertial forward-backward algorithm \eqref{algo7b} is a reinterpretation of the proof of the corresponding results in the continuous case. In other words, we built a complete proof having the continuous setting as a guideline. It would be interesting to know whether the results in \cite{Alv_Pey2,Alv_Pey3} can be applied in order to deduce the asymptotic properties without repeating the proofs.

\subsection{Hessian-driven damping}

In the dynamical system studied here, second-order information with respect to time ultimately induces fast convergence properties. On the other hand, in Newton-type methods, second-order information in space, has a similar consequence. In a forthcoming paper, we analyze the solutions of the second-order evolution equation
\begin{equation*}
 \ddot{x}(t) + \frac{\alpha}{t} \dot{x}(t) + \beta\,\nabla^2 \Phi (x(t))\,\dot{x} (t) + \nabla \Phi (x(t)) = 0,
\end{equation*}
where $\Phi$ is a smooth convex function, and $\alpha$, $\beta$ are positive parameters. This inertial system combines an isotropic viscous damping which vanishes asymptotically, and
a geometrical Hessian-driven damping, which makes it naturally related to Newton and Levenberg-Marquardt methods.

\section{Appendix: Some auxiliary results}

In this section, we present some auxiliary lemmas to be used later on. The following result can be found in \cite{AAS}:

\begin{lemma} \label{lm:aux} 
Let $\delta >0$, $1\leq p<\infty$ and $1\leq r\leq\infty$. Suppose $F\in L^{p}([\delta,\infty[)$
is a locally absolutely continuous nonnegative function, $G\in L^{r}([\delta,\infty[)$
and 
$$
\frac{d}{dt}F(t)\leq G(t)
$$
for almost every $t>\delta$. Then $\lim_{t\to\infty}F(t)=0$. 
 \end{lemma}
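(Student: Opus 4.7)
The plan is to argue by contradiction. Suppose $F(t)\not\to 0$. Since $F\ge 0$, there then exist $\varepsilon>0$ and an unbounded sequence $(t_n)\subset[\delta,\infty[$ with $F(t_n)\ge\varepsilon$ for every $n$. I will show this forces $F\notin L^p$, contradicting the hypothesis. The intuition is that although $F'$ can be very negative, the one-sided bound $F'\le G$ prevents $F$ from \emph{growing} too fast, so $F$ must remain comparable to $\varepsilon$ on a left-neighborhood of each $t_n$, producing too much $L^p$-mass.

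The key step is to exhibit a fixed window $\tau>0$ such that, for all sufficiently large $n$, $F\ge\varepsilon/2$ on the whole interval $[t_n-\tau,t_n]$. Using local absolute continuity and $F'\le G$, for every $t\in[t_n-\tau,t_n]$ one has
\[
F(t)\;=\;F(t_n)-\int_t^{t_n}F'(s)\,ds\;\ge\;\varepsilon-\int_{t_n-\tau}^{t_n}G(s)\,ds,
\]
so it suffices to make the last integral at most $\varepsilon/2$. For $1<r<\infty$, Hölder's inequality gives $\int_{t_n-\tau}^{t_n}G\le \tau^{1-1/r}\|G\|_{L^r}$, so the uniform choice $\tau=\bigl(\varepsilon/(2\|G\|_{L^r})\bigr)^{r/(r-1)}$ works for every $n$. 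For $r=\infty$, $\tau=\varepsilon/(2\|G\|_\infty)$ does the job (the case $G=0$ a.e.\ being trivial since then $F$ is nonincreasing). For the borderline case $r=1$, Hölder fails to give a modulus, but since $G\in L^1$ the tail $\int_{t_n-\tau}^{\infty}G$ tends to $0$ as $n\to\infty$ for any fixed $\tau$; hence picking any $\tau>0$ and $n$ large enough secures the required bound.

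Once $\tau>0$ is fixed, I extract a subsequence $(t_{n_k})$ with $t_{n_{k+1}}-t_{n_k}>\tau$, so the intervals $I_k:=[t_{n_k}-\tau,t_{n_k}]$ are pairwise disjoint and contained in $[\delta,\infty[$. Then
\[
\int_\delta^{\infty}F(t)^p\,dt\;\ge\;\sum_k\int_{I_k}F(t)^p\,dt\;\ge\;\sum_k\tau\,(\varepsilon/2)^p\;=\;+\infty,
\]
contradicting $F\in L^p$. The main subtlety I anticipate is the borderline case $r=1$: there one cannot obtain a uniform-in-$n$ modulus from Hölder, and must instead trade uniformity for the tail-smallness of the $L^1$ function $G$, which is why the argument requires passing to sufficiently large $n$ rather than working with every $n$ from the start.
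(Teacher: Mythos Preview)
The paper does not supply its own proof of this lemma; it simply refers the reader to \cite{AAS}. There is therefore nothing to compare against, but your argument is correct and self-contained.

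One small point of care: since $G$ is not assumed nonnegative, the passage from $\int_t^{t_n}G(s)\,ds$ to $\int_{t_n-\tau}^{t_n}G(s)\,ds$ is not automatic (the extra piece $\int_{t_n-\tau}^{t}G$ could be negative). The cleanest fix is to replace $G$ by its positive part $G_+:=\max(G,0)$ at the outset: one still has $F'\le G_+$ a.e.\ and $G_+\in L^r$ with $\|G_+\|_{L^r}\le\|G\|_{L^r}$, and now the monotonicity in the domain of integration is legitimate. With this adjustment your H\"older bound for $1<r\le\infty$ and your tail argument for $r=1$ go through exactly as written, and the disjoint-intervals contradiction with $F\in L^p$ is correct.
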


To establish the weak convergence of the solutions of \eqref{edo01}, we will use Opial's Lemma \cite{Op}, that we recall in its continuous form. This argument was first used in \cite{Bruck} to establish the convergence of nonlinear contraction semigroups.

\begin{lemma}\label{Opial} Let $S$ be a nonempty subset of $\mathcal H$ and let $x:[0,+\infty)\to \mathcal H$. Assume that 
\begin{itemize}
\item [(i)] for every $z\in S$, $\lim_{t\to\infty}\|x(t)-z\|$ exists;
\item [(ii)] every weak sequential limit point of $x(t)$, as $t\to\infty$, belongs to $S$.
\end{itemize}
Then $x(t)$ converges weakly as $t\to\infty$ to a point in $S$.
\end{lemma}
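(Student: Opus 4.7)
The plan is to follow the standard three-step Opial argument: derive boundedness from (i), establish uniqueness of the weak sequential cluster point via a simple inner-product identity, and upgrade that uniqueness to weak convergence of the whole trajectory.

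First, I would fix any $z_0 \in S$. By hypothesis (i), $\|x(t) - z_0\|$ has a finite limit as $t\to\infty$, and in particular $x(t)$ is bounded in $\mathcal H$. Because bounded sets in a Hilbert space are weakly relatively sequentially compact, the set of weak sequential cluster points of $x(t)$ as $t\to\infty$ is nonempty; by (ii), every such cluster point belongs to $S$.

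The heart of the proof is showing that there is at most one weak cluster point. Suppose $z_1, z_2 \in S$ are both weak cluster points, reached along sequences $t_n^1, t_n^2 \to \infty$ with $x(t_n^i) \rightharpoonup z_i$. Expanding norms gives the identity
\[
\|x(t) - z_1\|^2 - \|x(t) - z_2\|^2 \;=\; 2\langle x(t),\, z_2 - z_1\rangle + \|z_1\|^2 - \|z_2\|^2 .
\]
By (i), both $\|x(t) - z_1\|^2$ and $\|x(t) - z_2\|^2$ converge as $t\to\infty$, so the left-hand side has a finite limit, and therefore the scalar function $t \mapsto \langle x(t), z_2 - z_1\rangle$ has a finite limit $\ell \in \mathbb R$. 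Evaluating this limit along $t_n^1$ and $t_n^2$ yields $\ell = \langle z_1, z_2 - z_1\rangle$ and $\ell = \langle z_2, z_2 - z_1\rangle$, respectively, whence $\|z_2 - z_1\|^2 = 0$ and $z_1 = z_2$. Denote the unique cluster point by $x_\infty \in S$.

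Finally, I would upgrade uniqueness of the weak sequential cluster point to genuine weak convergence $x(t)\rightharpoonup x_\infty$. Suppose, to the contrary, that there exist $\varphi \in \mathcal H$, $\varepsilon>0$, and $t_n \to \infty$ with $|\langle x(t_n) - x_\infty, \varphi\rangle| \geq \varepsilon$ for all $n$. Since $\{x(t_n)\}$ is bounded, it admits a weakly convergent subsequence; its limit is a weak sequential cluster point of $x(t)$, hence equals $x_\infty$ by the previous step, contradicting the uniform lower bound on $|\langle x(t_n) - x_\infty,\varphi\rangle|$. I expect this last upgrade step to be the only subtle point: one must phrase the contradiction by testing against one continuous linear functional at a time (as above), so as to reduce to an ordinary sequential argument and sidestep any issue with non-metrizability of the weak topology on all of $\mathcal H$.
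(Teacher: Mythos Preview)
Your proof is correct and is precisely the classical Opial argument. Note, however, that the paper does not actually supply a proof of this lemma: it merely recalls the statement and attributes it to Opial \cite{Op} (and its first use in this context to Bruck \cite{Bruck}). There is thus nothing to compare against; your write-up is a faithful reconstruction of the standard proof, and each of the three steps (boundedness and existence of weak cluster points, uniqueness via the polarization identity, and the subsequence-of-a-subsequence upgrade) is carried out cleanly.
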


The following allows us to establish the existence of a limit for a real-valued function, as $t\to+\infty$:

\begin{lemma}\label{basic-edo} 
Let $\delta >0$, and let $w: [\delta, +\infty[ \rightarrow \mathbb R$ be a continuously differentiable function which is bounded from below. Assume
\begin{equation}\label{basic-edo1}
t\ddot{w}(t) + \alpha \dot w(t) \leq g(t),
\end{equation}
for some $\alpha > 1$, almost every $t>\delta$, and some nonnegative function $g\in L^1 (\delta, +\infty)$. Then, the positive part $[\dot w]_+$ of $\dot w$ belongs to $L^1(t_0,+\infty)$ and $\lim_{t\to+\infty}w(t)$ exists.
\end{lemma}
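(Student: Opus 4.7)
The key observation is that the differential inequality $t\ddot w+\alpha\dot w\le g$ has an integrating factor of $t^{\alpha-1}$, since multiplying through yields
$$\frac{d}{dt}\bigl(t^\alpha\dot w(t)\bigr)=t^\alpha\ddot w(t)+\alpha t^{\alpha-1}\dot w(t)\le t^{\alpha-1}g(t).$$
Integrating from $\delta$ to $t$ and dividing by $t^\alpha$ gives a pointwise bound
$$\dot w(t)\le \frac{\delta^\alpha\dot w(\delta)}{t^\alpha}+\frac{1}{t^\alpha}\int_\delta^t s^{\alpha-1}g(s)\,ds,$$
and hence the same bound holds for $[\dot w(t)]_+$ (with $\dot w(\delta)$ replaced by its absolute value on the right).

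The plan then is to integrate this bound over $[\delta,+\infty[$ and show both terms are finite. The first is immediate because $\alpha>1$, so $\int_\delta^\infty t^{-\alpha}\,dt<\infty$. For the second, I would swap the order of integration using Fubini (both integrands are nonnegative), yielding
$$\int_\delta^\infty\!\!\frac{1}{t^\alpha}\!\int_\delta^t s^{\alpha-1}g(s)\,ds\,dt=\int_\delta^\infty s^{\alpha-1}g(s)\!\int_s^\infty\!\!\frac{dt}{t^\alpha}\,ds=\frac{1}{\alpha-1}\int_\delta^\infty g(s)\,ds<+\infty,$$
where the last inequality uses $g\in L^1$. This establishes $[\dot w]_+\in L^1(\delta,+\infty)$.

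For the existence of $\lim_{t\to+\infty}w(t)$, I would split $\dot w=[\dot w]_+-[\dot w]_-$ and write
$$w(t)=w(\delta)+\int_\delta^t[\dot w(s)]_+\,ds-\int_\delta^t[\dot w(s)]_-\,ds.$$
The first integral converges as $t\to+\infty$ by the $L^1$ bound just proved. The second integral is nondecreasing in $t$ (since $[\dot w]_-\ge 0$), so it admits a limit in $[0,+\infty]$; if that limit were $+\infty$, then $w(t)\to-\infty$, contradicting the assumption that $w$ is bounded from below. Hence both integrals have finite limits and $\lim_{t\to+\infty}w(t)$ exists in $\mathbb R$.

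The argument is essentially routine once the integrating factor $t^{\alpha-1}$ is identified; the only subtle point is the final step, where the boundedness-from-below assumption is used to upgrade monotonicity of $\int[\dot w]_-$ to convergence. This is a standard trick, but it is worth stating explicitly since it is precisely what prevents $w$ from drifting to $-\infty$ despite the one-sided control we have on $\dot w$.
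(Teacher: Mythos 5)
Your proposal is correct and follows essentially the same route as the paper: the integrating factor $t^{\alpha-1}$, the pointwise bound on $[\dot w]_+$, Fubini to get $[\dot w]_+\in L^1$, and then the boundedness-from-below argument (the paper phrases this last step via the auxiliary nonincreasing function $\theta(t)=w(t)-\int_\delta^t[\dot w]_+$, which is the same idea as your decomposition of $\dot w$ into positive and negative parts). No gaps to report.
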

 
\begin{proof} 
Multiply \eqref{basic-edo1} by $t^{\alpha-1}$ to obtain
$$
\frac{d}{dt} \big(t^{\alpha} \dot w(t)\big) \leq t^{\alpha-1} g(t).
$$
By integration, we obtain
$$
\dot w(t) \leq \frac{{\delta}^{\alpha}|\dot w(\delta)|}{t^{\alpha} }  +  \frac{1}{t^{\alpha} }  \int_{\delta}^t  s^{\alpha-1} g(s)ds.
$$
Hence, 
$$
[\dot w]_{+}(t)  \leq \frac{{\delta}^{\alpha}|\dot w(\delta)|}{t^{\alpha} }  +  \frac{1}{t^{\alpha} }  \int_{\delta}^t  s^{\alpha-1} g(s)ds,
$$
and so,
$$
\int_{\delta}^{\infty} [\dot w]_{+}(t)  dt \leq \frac{{\delta}^{\alpha}|\dot w(\delta)|}{(\alpha -1) \delta^{\alpha -1}} +
  \int_{\delta}^{\infty}\frac{1}{t^{\alpha}}  \left(  \int_{\delta}^t  s^{\alpha-1} g(s) ds\right)  dt.
$$
Applying Fubini's Theorem, we deduce that
$$
\int_{\delta}^{\infty}\frac{1}{t^{\alpha}}  \left(  \int_{\delta}^t  s^{\alpha-1} g(s) ds\right)  dt =     
\int_{\delta}^{\infty}  \left(  \int_{s}^{\infty}  \frac{1}{t^{\alpha}} dt\right) s^{\alpha-1} g(s) ds 
= \frac{1}{\alpha -1} \int_{\delta}^{\infty}g(s) ds.
$$
As a consequence,
$$
 \int_{\delta}^{\infty} [\dot w]_{+}(t) dt \leq \frac{{\delta}^{\alpha}|\dot w(\delta)  |}{(\alpha -1) \delta^{\alpha -1}}  +
 \frac{1}{\alpha -1} \int_{\delta}^{\infty}g(s)  ds < + \infty.
$$
Finally, the function $\theta:[\delta,+\infty)\to\R$, defined by
$$\theta(t)=w(t)-\int_{\delta}^{t}[\dot w]_+(\tau)\,d\tau,$$
is nonincreasing and bounded from below. It follows that
$$\lim_{t\to+\infty}w(t)=\lim_{t\to+\infty}\theta(t)+\int_{\delta}^{+\infty}[\dot w]_+(\tau)\,d\tau$$ 
exists.
\end{proof}

The following is a vector-valued version of Lemma \ref{basic-edo}:

\begin{lemma}\label{lemma-edo1} 
Take $\delta >0$, and let $F \in L^1 (\delta , +\infty; \mathcal H)$ be continuous. Let $x:[\delta, +\infty[ \rightarrow \mathcal H$ be a solution of
\begin{equation}\label{strong-conv-int10}
t\ddot{x}(t) + \alpha \dot{x}(t)  = F(t)
\end{equation}
with $\alpha >1$. Then, $x(t)$ converges strongly in $\mathcal H$ as $t \to + \infty$.
 \end{lemma}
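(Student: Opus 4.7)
The plan is to mimic the proof of Lemma \ref{basic-edo}, with the simplification that in the vector-valued setting there is no need to isolate a positive part: it will be enough to show that $\dot x$ itself belongs to $L^1(\delta,+\infty;\mathcal H)$, since this directly yields the existence of $\lim_{t\to+\infty}x(t)$ in $\mathcal H$ via the Cauchy criterion applied to $x(t)=x(\delta)+\int_\delta^t\dot x(s)\,ds$.

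First I would multiply \eqref{strong-conv-int10} by the integrating factor $t^{\alpha-1}$ to recognize the left-hand side as a total derivative:
$$\frac{d}{dt}\bigl(t^{\alpha}\dot x(t)\bigr)=t^{\alpha-1}F(t).$$
Integrating from $\delta$ to $t$ and dividing by $t^{\alpha}$ produces the explicit formula
$$\dot x(t)=\frac{\delta^{\alpha}\dot x(\delta)}{t^{\alpha}}+\frac{1}{t^{\alpha}}\int_{\delta}^{t}s^{\alpha-1}F(s)\,ds.$$

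Next I would bound $\|\dot x\|$ by the triangle inequality and integrate over $[\delta,+\infty[$. The first term contributes $\delta^{\alpha}\|\dot x(\delta)\|/((\alpha-1)\delta^{\alpha-1})$, which is finite because $\alpha>1$. For the second term, applying Fubini's theorem exactly as in Lemma \ref{basic-edo}, I would compute
$$\int_{\delta}^{+\infty}\frac{1}{t^{\alpha}}\left(\int_{\delta}^{t}s^{\alpha-1}\|F(s)\|\,ds\right)dt=\int_{\delta}^{+\infty}s^{\alpha-1}\|F(s)\|\left(\int_{s}^{+\infty}\frac{dt}{t^{\alpha}}\right)ds=\frac{1}{\alpha-1}\int_{\delta}^{+\infty}\|F(s)\|\,ds,$$
which is finite by the assumption $F\in L^1(\delta,+\infty;\mathcal H)$. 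Putting the two estimates together gives $\dot x\in L^1(\delta,+\infty;\mathcal H)$.

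Finally, I would conclude by noting that the identity $x(t)-x(t')=\int_{t'}^{t}\dot x(s)\,ds$ together with $\dot x\in L^1$ implies that $t\mapsto x(t)$ satisfies the Cauchy criterion at infinity in the Hilbert space $\mathcal H$, and therefore converges strongly as $t\to+\infty$. There is no genuine obstacle here: the only point requiring a moment of care is the use of Bochner-type estimates (triangle inequality under the integral sign) to pass from the vector-valued formula for $\dot x(t)$ to a scalar $L^1$-bound on $\|\dot x(t)\|$, after which the argument is routine.
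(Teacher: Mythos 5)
Your argument is correct, but it follows a different route from the paper's. Both proofs begin identically: multiply \eqref{strong-conv-int10} by $t^{\alpha-1}$, recognize $\frac{d}{dt}\bigl(t^{\alpha}\dot x(t)\bigr)=t^{\alpha-1}F(t)$, and integrate to obtain the explicit formula for $\dot x(t)$. From there the paper integrates a \emph{second} time, applies Fubini to rewrite the resulting double integral, and then identifies the limit of each term in the closed-form expression for $x(t)$, invoking the continuous Kronecker-type Lemma \ref{basic-int} (with $\psi(s)=s^{\alpha-1}$, $f(s)=\|F(s)\|$) to show that $t^{-(\alpha-1)}\int_\delta^t\tau^{\alpha-1}F(\tau)\,d\tau\to0$. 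You instead take norms in the formula for $\dot x(t)$ and apply Fubini once to conclude $\dot x\in L^1(\delta,+\infty;\mathcal H)$, which gives strong convergence of $x(t)$ by the Cauchy criterion; this mirrors the proof of the scalar Lemma \ref{basic-edo} (without needing the positive-part device) rather than the paper's second integration. Your version buys a slightly stronger intermediate conclusion, namely absolute integrability of the velocity, and it bypasses Lemma \ref{basic-int} entirely; the paper's version buys an explicit expression for the limit, $\lim_{t\to+\infty}x(t)=x(\delta)+\frac{\delta\,\dot x(\delta)}{\alpha-1}+\frac{1}{\alpha-1}\int_\delta^{+\infty}F(\tau)\,d\tau$, at the cost of one more integration and the auxiliary lemma. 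The only points requiring care in your route — the Bochner-type triangle inequality under the integral sign and the justification of $x(t)-x(t')=\int_{t'}^{t}\dot x(s)\,ds$ for a classical solution — are exactly as routine as you say.
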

\begin{proof}
As in the proof of Lemma \ref{basic-edo}, multiply \eqref{strong-conv-int10} by $t^{\alpha -1}$ and integrate to obtain
$$\dot{x}(t) = \frac{{\delta}^{\alpha}\dot{x}(\delta)}{t^{\alpha}} + 
\frac{1}{t^{\alpha}} \int_{\delta}^t s^{\alpha -1}F(s)ds.$$
Integrate again to deduce that
$$x(t) = x(\delta) + {\delta}^{\alpha}\dot{x}(\delta)  \int_{\delta}^t   \frac{1}{s^{\alpha}} ds + \int_{\delta}^t
\frac{1}{s^{\alpha}} \left( \int_{\delta}^s {\tau}^{\alpha -1}F(\tau)d\tau\right) ds.$$
Fubini's Theorem applied to the last integral gives
\begin{equation}\label{strong-conv-int14}
x(t) = x(\delta) +   \frac{{\delta}^{\alpha}\dot{x}(\delta)}{\alpha-1}\left(\frac{1}{\delta^{\alpha-1}}-\frac{1}{t^{\alpha-1}}\right)+ 
\frac{1}{\alpha -1} \left( \int_{\delta}^t F(\tau)d\tau
- \frac{1}{t^{\alpha-1}}   \int_{\delta}^t {\tau}^{\alpha-1} F(\tau)d\tau \right).
\end{equation}
Finally, apply Lemma \ref{basic-int} to the last integral with $\psi(s)=s^{\alpha-1}$ and $f(s)=\|F(s)\|$ to conclude that all the terms in the right-hand side of \eqref{strong-conv-int14} have a limit as $t \to + \infty$.
\end{proof}

The following is a continuous version of Kronecker's Theorem for series (see, for example, \cite[page 129]{Kno}):

\begin{lemma}\label{basic-int} 
Take $\delta >0$, and let $f \in L^1 (\delta , +\infty)$ be nonnegative and continuous. Consider a nondecreasing function $\psi:(\delta,+\infty)\to(0,+\infty)$ such that $\lim\limits_{t\to+\infty}\psi(t)=+\infty$. Then, 
$$\lim_{t \rightarrow + \infty} \frac{1}{\psi(t)} \int_{\delta}^t \psi(s)f(s)ds =0.$$ 
\end{lemma}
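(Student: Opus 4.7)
The plan is to use a tail-splitting argument that exploits the $L^1$ integrability of $f$ together with the monotonicity and divergence of $\psi$. This is the standard continuous analogue of the argument used to prove Kronecker's theorem for numerical series.

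Fix $\varepsilon>0$. Since $f\in L^1(\delta,+\infty)$ and $f\ge 0$, I would first choose $T>\delta$ large enough that
$$\int_T^{+\infty} f(s)\,ds<\varepsilon.$$
For $t>T$, I would then split the integral at $T$:
$$\frac{1}{\psi(t)}\int_{\delta}^t\psi(s)f(s)\,ds = \frac{1}{\psi(t)}\int_{\delta}^T\psi(s)f(s)\,ds + \frac{1}{\psi(t)}\int_T^t\psi(s)f(s)\,ds.$$

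For the first (fixed) piece, $\psi$ is bounded on $[\delta,T]$ by $\psi(T)$ (using that $\psi$ is nondecreasing), so $\int_\delta^T\psi(s)f(s)\,ds\le\psi(T)\int_\delta^T f(s)\,ds<+\infty$ is a constant independent of $t$. Since $\psi(t)\to+\infty$, this first term tends to $0$ as $t\to+\infty$, and hence is bounded by $\varepsilon$ for $t$ sufficiently large. For the second piece, the key observation is that $\psi$ is nondecreasing, so $\psi(s)\le\psi(t)$ for all $s\in[T,t]$, which yields
$$\frac{1}{\psi(t)}\int_T^t\psi(s)f(s)\,ds \le \int_T^t f(s)\,ds \le \int_T^{+\infty}f(s)\,ds<\varepsilon.$$

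Combining both estimates, for all $t$ sufficiently large,
$$\frac{1}{\psi(t)}\int_{\delta}^t\psi(s)f(s)\,ds<2\varepsilon,$$
which gives the desired limit since $\varepsilon$ was arbitrary. There is no real obstacle here; the only point to be careful about is ensuring that $\int_\delta^T\psi(s)f(s)\,ds$ is finite, which is immediate because $\psi$ is nondecreasing (hence bounded on the compact interval $[\delta,T]$) and $f\in L^1$. Note that continuity of $f$ is not actually used in a substantive way: measurability and nonnegativity suffice.
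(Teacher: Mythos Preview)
Your proof is correct and follows essentially the same tail-splitting argument as the paper: choose a cutoff so that the tail of $f$ has mass at most $\varepsilon$, bound the second piece using $\psi(s)\le\psi(t)$, and let $\psi(t)\to+\infty$ kill the first piece. The only cosmetic difference is that the paper phrases the conclusion via a $\limsup$ bounded by $\varepsilon$, whereas you obtain an explicit $2\varepsilon$ bound; your remark that continuity of $f$ is not actually needed is also correct.
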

 \begin{proof} 
Given $\epsilon >0$, fix $t_\epsilon$ sufficiently large so that 
$$\int_{t_\epsilon}^{\infty}  f(s) ds \leq \epsilon.$$
Then, for $t \geq t_\epsilon$, split the integral $\int_{\delta}^t \psi(s)f(s) ds$ into two parts to obtain
$$\frac{1}{\psi(t)} \int_{\delta}^t \psi(s)f(s)ds =
\frac{1}{\psi(t)}\int_{\delta}^{t_\epsilon} \psi(s) f(s) ds
+ \frac{1}{\psi(t)}\int_{t_\epsilon}^t \psi(s) f(s) ds 
\leq   \frac{1}{\psi(t)}\int_{\delta}^{t_\epsilon} \psi(s)f(s) ds
+ \int_{t_\epsilon}^t  f(s) ds.$$
Now let $t\to+\infty$ to deduce that
$$0\le\limsup_{t\to+\infty}\frac{1}{\psi(t)}\int_{\delta}^t \psi(s)f(s)ds \le \epsilon.$$
Since this is true for any $\epsilon>0$, the result follows.
\end{proof}


\end{document}